\def\@seccntformat#1{%
  \protect\textup{%
    \protect\@secnumfont
    \expandafter\protect\csname format#1\endcsname 
    \csname the#1\endcsname
    \protect\@secnumpunct
  }%
}
\begin{document}
\title[Twisted Immanant]
{Twisted Immanant and \\ Matrices with Anticommuting Entries}
\author{Minoru ITOH}
\date{}
\address{Department of Mathematics and Computer Science, 
          Faculty of Science,
          Kagoshima University, Kagoshima 890-0065, Japan}
\email{itoh@sci.kagoshima-u.ac.jp }
\keywords{Immanant, Symmetric group, Alternating group,
Exterior algebra, Symmetric function}
\subjclass[2010]{Primary 
15A15
, 15B33
, 15A75
, 15A24
; Secondary 16R
, 05E05
, 20C30
;}
\begin{abstract}
   This article gives    
   a new matrix function named ``twisted immanant,''
   which can be regarded as an analogue of the immanant.
   This is defined for each self-conjugate partition
   through a ``twisted'' analogue of the irreducible character of the symmetric group.
   This twisted immanant has some interesting properties.
   For example, it satisfies Cauchy--Binet type formulas.
   Moreover it is closely related to the following results
   for matrices whose entries anticommute with each other:
   (i) the description of the invariants under the conjugations, and 
   (ii) an analogue of the Cauchy identities for symmetric polynomials.
\end{abstract}
\thanks{This research was partially supported by JSPS Grant-in-Aid for Young Scientists (B) 20740020 and JSPS Grant-in-Aid for Young Scientists (B) 24740021.
}
\maketitle
\theoremstyle{plain}
   \newtheorem{theorem}{Theorem}[section]
   \newtheorem{proposition}[theorem]{Proposition}
   \newtheorem{lemma}[theorem]{Lemma}
   \newtheorem{corollary}[theorem]{Corollary}
\theoremstyle{remark}
   \newtheorem*{remark}{Remark}
\numberwithin{equation}{section}
%
%
\section{Introduction}
%
%
In this article, we introduce a new matrix function named the \emph{twisted immanant}.
This analogue of the immanant is defined by
\[
   \operatorname{imm}^{*\lambda} A
   = \sum_{\sigma \in \mathfrak{S}_n} \chi^{*\lambda}(\sigma) a_{1\sigma(1)} \cdots a_{n\sigma(n)}
\]
for a self-conjugate partition $\lambda$ of~$n$.
Here the notation is as follows.
We fix an associative $\mathbb{C}$-algebra $\mathcal{A}$,
and let $A = (a_{ij})_{1 \leq i,j \leq n}$ be a matrix
whose entries are elements of~$\mathcal{A}$.
Moreover, $\chi^{*\lambda}$ is a certain complex valued function on the symmetric group $\mathfrak{S}_n$,
which is determined by~$\lambda$ naturally 
(the definition will be given in Section~\ref{subsec:chi*}).
This $\chi^{*\lambda}$ is \emph{twisted} in the following sense,
where $\operatorname{sgn}(\tau)$ is the signature of~$\tau$:
\[
   \chi^{*\lambda}(\tau\sigma\tau^{-1})
   = \operatorname{sgn}(\tau) \chi^{*\lambda}(\sigma).
\]
Compare this definition of the twisted immanant
with that of the ordinary immanant:
\[
   \operatorname{imm}^{\lambda} A
   = \sum_{\sigma \in \mathfrak{S}_n} \chi^{\lambda}(\sigma) a_{1\sigma(1)} \cdots a_{n\sigma(n)}.
\]
Here $\chi^{\lambda}$ is the irreducible character of~$\mathfrak{S}_n$,
which is a class function on~$\mathfrak{S}_n$.

\subsection{Cauchy--Binet type formulas}
The ordinary immanant and the twisted immanant satisfy 
the following Cauchy--Binet type formulas: 

\begin{theorem}
   \label{thm(introduction):Cauchy-Binet_for_imm*}\slshape
   Consider $A \in \operatorname{Mat}_{L,M}(\mathcal{A})$ and 
   $B \in \operatorname{Mat}_{M,N}(\mathcal{A})$,
   and assume that the entries of~$A$ and $B$ commute with each other.
   Then, for~$I \in [L]^n$ and $K \in [N]^n$, we have
   \begin{align*}
      \operatorname{imm}^{\lambda} (AB)_{IK}
      &= \frac{\chi^{\lambda}(1)}{n!} \sum_{J \in [M]^n}
      \operatorname{imm}^{\lambda} A_{IJ} \,
      \operatorname{imm}^{\lambda} B_{JK} \\
      &= \frac{\chi^{\lambda}(1)}{n!} \sum_{J \in [M]^n}
      \operatorname{imm}^{*\lambda} A_{IJ} \,
      \operatorname{imm}^{*\lambda} B_{JK}, 
      \allowdisplaybreaks\\
      \operatorname{imm}^{*\lambda} (AB)_{IK}
      &= \frac{\chi^{\lambda}(1)}{n!} \sum_{J \in [M]^n}
      \operatorname{imm}^{\lambda} A_{IJ} \,
      \operatorname{imm}^{*\lambda} B_{JK} \\
      &= \frac{\chi^{\lambda}(1)}{n!} \sum_{J \in [M]^n}
      \operatorname{imm}^{*\lambda} A_{IJ} \,
      \operatorname{imm}^{\lambda} B_{JK}.
   \end{align*}
\end{theorem}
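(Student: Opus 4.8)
The plan is to extract a single ``Cauchy--Binet machine'' valid for arbitrary weight functions on $\mathfrak{S}_n$, and only at the end to feed in what one knows about $\chi^{\lambda}$ and $\chi^{*\lambda}$. For $g\colon\mathfrak{S}_n\to\mathbb{C}$ and a matrix $X=(x_{ij})$ over $\mathcal{A}$, write $\operatorname{imm}^{g}X=\sum_{\sigma\in\mathfrak{S}_n}g(\sigma)\,x_{1\sigma(1)}\cdots x_{n\sigma(n)}$, so that $\operatorname{imm}^{\lambda}=\operatorname{imm}^{\chi^{\lambda}}$ and $\operatorname{imm}^{*\lambda}=\operatorname{imm}^{\chi^{*\lambda}}$. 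The first step is to prove the lemma: if all the entries of $A$ and $B$ commute, then for any $g,h$ and any $I\in[L]^n$, $K\in[N]^n$,
\[
   \sum_{J\in[M]^n}\operatorname{imm}^{g}A_{IJ}\,\operatorname{imm}^{h}B_{JK}
   =\operatorname{imm}^{h*g}(AB)_{IK},\qquad
   (h*g)(\sigma)=\sum_{\alpha\in\mathfrak{S}_n}h(\alpha)\,g(\alpha^{-1}\sigma),
\]
so that $\widehat{h*g}=\widehat{h}\,\widehat{g}$ in the group algebra. Granting this, Theorem~\ref{thm(introduction):Cauchy-Binet_for_imm*} follows by reading off, line by line, the four convolution identities
\[
   \chi^{\lambda}*\chi^{\lambda}=\tfrac{n!}{\chi^{\lambda}(1)}\chi^{\lambda},\qquad
   \chi^{*\lambda}*\chi^{*\lambda}=\tfrac{n!}{\chi^{\lambda}(1)}\chi^{\lambda},\qquad
   \chi^{\lambda}*\chi^{*\lambda}=\chi^{*\lambda}*\chi^{\lambda}=\tfrac{n!}{\chi^{\lambda}(1)}\chi^{*\lambda};
\]
the prefactor $\chi^{\lambda}(1)/n!$ in the theorem is exactly what absorbs the $n!/\chi^{\lambda}(1)$ on the right.

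For the lemma I would expand each entry $(AB)_{i_pk_q}=\sum_{m}a_{i_pm}b_{mk_q}$ and, identifying a tuple $J\in[M]^n$ with a function $\psi\colon[n]\to[M]$, rewrite the left-hand side as a sum over $\psi$ and over $\alpha,\beta\in\mathfrak{S}_n$ of $g(\alpha)h(\beta)$ times $\bigl(\prod_{p}a_{i_p\psi(\alpha(p))}\bigr)\bigl(\prod_{q}b_{\psi(q)k_{\beta(q)}}\bigr)$, where commutativity of the entries is used to collect the $a$'s on the left and the $b$'s on the right. For each fixed $\alpha$, re-parametrising the $\psi$-sum by $\psi=\phi\circ\alpha^{-1}$ turns the $a$-product into $\prod_p a_{i_p\phi(p)}$ with no reordering, and re-indexing the $b$-product via $q=\alpha(r)$ (this does permute the factors, so commutativity is used once more) turns it into $\prod_r b_{\phi(r)k_{(\beta\alpha)(r)}}$; finally setting $\sigma=\beta\alpha$ collapses the $\alpha,\beta$-summation into $\sum_\sigma(h*g)(\sigma)\prod_r b_{\phi(r)k_{\sigma(r)}}$, and re-expanding the middle index recovers $\operatorname{imm}^{h*g}(AB)_{IK}$ on the nose. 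This step is pure bookkeeping; the only care needed is in tracking the order of the $a$- and $b$-factors, which is precisely what the commutativity hypothesis controls.

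For the convolution identities I would pass to the group algebra $\mathbb{C}[\mathfrak{S}_n]$, where convolution of functions corresponds to multiplication of the elements $\widehat{g}=\sum_\sigma g(\sigma)\sigma$. The identity $\chi^{\lambda}*\chi^{\lambda}=\tfrac{n!}{\chi^{\lambda}(1)}\chi^{\lambda}$ is the classical fact that $e_{\lambda}:=\tfrac{\chi^{\lambda}(1)}{n!}\widehat{\chi^{\lambda}}$ is the central idempotent of the $\lambda$-block. For the other two, the input needed about $\chi^{*\lambda}$---which should be available from its construction in Section~\ref{subsec:chi*}---is that $e^{*}_{\lambda}:=\tfrac{\chi^{\lambda}(1)}{n!}\widehat{\chi^{*\lambda}}$ lies in that same $\lambda$-block and that $(e^{*}_{\lambda})^{2}=e_{\lambda}$; the first of these yields $e_{\lambda}e^{*}_{\lambda}=e^{*}_{\lambda}e_{\lambda}=e^{*}_{\lambda}$, hence $\chi^{\lambda}*\chi^{*\lambda}=\chi^{*\lambda}*\chi^{\lambda}=\tfrac{n!}{\chi^{\lambda}(1)}\chi^{*\lambda}$, and the second yields $\chi^{*\lambda}*\chi^{*\lambda}=\tfrac{n!}{\chi^{\lambda}(1)}\chi^{\lambda}$. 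Both statements are pointed to by the block picture: the twisting relation $\chi^{*\lambda}(\tau\sigma\tau^{-1})=\operatorname{sgn}(\tau)\chi^{*\lambda}(\sigma)$ forces $\tau\,e^{*}_{\lambda}\,\tau^{-1}=\operatorname{sgn}(\tau)\,e^{*}_{\lambda}$, such twisted elements make up a one-dimensional subspace inside each self-conjugate block (and nothing in the other blocks), and $(e^{*}_{\lambda})^{2}$, being conjugation-invariant and lying in a block, is automatically a scalar multiple of $e_{\lambda}$. The one genuinely non-formal point---and the step I expect to be the main obstacle---is to pin that scalar down to exactly $1$; this is where the explicit description of $\chi^{*\lambda}$, and ultimately Frobenius-type orthogonality relations for its values on the split conjugacy classes, must be invoked. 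Everything else in the argument is either classical or routine bookkeeping.
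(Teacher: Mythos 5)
Your proposal is correct and is essentially the paper's own argument: the paper proves this theorem by the same expand-and-regroup computation used for Proposition~\ref{prop:Cauchy-Binet_for_imm}, with the relations \eqref{eq:relations_for_chi} and \eqref{eq:orthogonal_relations_for_chi*} supplying exactly your four convolution identities (your general lemma is just that computation run in the opposite direction, before specializing the weight functions). In particular, the normalization $(e^{*}_{\lambda})^{2}=e_{\lambda}$, i.e.\ $\chi^{*\lambda}*\chi^{*\lambda}=\tfrac{n!}{\chi^{\lambda}(1)}\chi^{\lambda}$, which you flag as the main obstacle, is precisely the second relation in \eqref{eq:orthogonal_relations_for_chi*}, recorded in the paper (following the irreducibility of $\psi^{\lambda\pm}$, cf.\ James--Kerber), so no additional work is needed there.
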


Here the notation is as follows.
First, we denote by~$\operatorname{Mat}_{m,n}(\mathcal{A})$ the set of all $m \times n$ matrices
whose entries are in~$\mathcal{A}$.
Next, we put $[k] = \{ 1,\ldots,k \}$.
Finally, we put 
$X_{IJ} = (x_{i_s j_t})_{1 \leq s,t \leq n}$ for an $M \times N$ matrix $X = (x_{ij})$
and 
\[
   I = (i_1,\ldots,i_n) \in [M]^n, \qquad
   J = (j_1,\ldots,j_n) \in [N]^n.
\]

\subsection{A sum of twisted immanants}
We additionally introduce a matrix function defined as a sum of twisted immanants.
Namely, for~$A \in \operatorname{Mat}_{N,N}(\mathcal{A})$ and 
a self-conjugate partition $\lambda$ of~$n$, we put 
\[
   \operatorname{imm}^{*\lambda}_n A 
   = \frac{1}{n!} \sum_{I \in [N]^n} \operatorname{imm}^{*\lambda} A_{II}.
\]
This is invariant under the conjugation by~$GL_N(\mathbb{C})$.
Namely, for any $g \in GL_N(\mathbb{C})$, we have 
\[
   \operatorname{imm}^{*\lambda}_n gAg^{-1}
   =  \operatorname{imm}^{*\lambda}_n A,
\]
even if the entries of~$A$ do not commute with each other.
When the entries of~$A$ commute with each other,
this invariance is a corollary of Theorem~\ref{thm(introduction):Cauchy-Binet_for_imm*}.

\subsection{Relation with the trace}
The twisted immanant has many applications to matrices with anticommuting entries.
In the remainder of the introduction, we state these applications.

\begin{remark}
   We say that elements of a set $X$ \emph{anticommute} with each other
   when we have $xy = -yx$ for any $x$, $y \in X$ 
   (in particular, $xx = -xx$ for any $x \in X$).
\end{remark}

First, we have the following relation with the trace:

\begin{theorem}
   \label{thm(introduction):imm*_n_and_tr}\slshape
   Consider $A \in \operatorname{Mat}_{N,N}(\mathcal{A})$ 
   whose entries anticommute with each other,
   and put $\mu = (\mu_1,\ldots,\mu_r) = h(\lambda)$ for~$\lambda \in P_{\mathrm{self\text{-}conj}}(n)$.
   Then we have
   \[
      \operatorname{tr}(A^{\mu_1}) \cdots \operatorname{tr}(A^{\mu_r})
      = i^{-m(\lambda)}\sqrt{\mu_1 \cdots \mu_r} \,
      \operatorname{imm}^{*\lambda}_n A.
   \]
   In particular, considering the case $\lambda = (k+1,1^k)$, we have
   \[
      \operatorname{tr}(A^{2k+1})
      = i^{-k} \sqrt{2k+1} \, \operatorname{imm}^{*(k+1,1^{k})}_{2k+1} A.
   \]
\end{theorem}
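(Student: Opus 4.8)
The plan is to compute $\operatorname{imm}^{*\lambda}_n A$ directly from its definition, using the anticommutativity of the entries of $A$ to collapse the sum over $\mathfrak{S}_n$ onto a single conjugacy class. Write $\mu=(\mu_1,\ldots,\mu_r)=h(\lambda)$, the principal hook lengths of $\lambda$; these are distinct odd positive integers with $\mu_1+\cdots+\mu_r=n$. First I would invoke from Section~\ref{subsec:chi*} the two properties of $\chi^{*\lambda}$ that the argument needs: (a) $\chi^{*\lambda}(\sigma)=0$ whenever $\sigma$ does not have cycle type $\mu$, and (b) on the standard representative $\sigma_0=(1\,2\,\cdots\,\mu_1)(\mu_1+1\,\cdots\,\mu_1+\mu_2)\cdots$ of cycle type $\mu$ one has $\chi^{*\lambda}(\sigma_0)=i^{m(\lambda)}\sqrt{\mu_1\cdots\mu_r}$. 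Since the parts of $\mu$ are distinct odd numbers, the centralizer of $\sigma_0$ in $\mathfrak{S}_n$ is generated by the cycles of $\sigma_0$, all of odd order, and hence consists of even permutations; so, writing an arbitrary $\sigma$ of cycle type $\mu$ as $\sigma=\tau\sigma_0\tau^{-1}$, the sign $\operatorname{sgn}(\tau)$ depends only on $\sigma$, and there are exactly $n!/(\mu_1\cdots\mu_r)$ such $\sigma$.

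Next I would fix $I=(i_1,\ldots,i_n)\in[N]^n$ and $\sigma=\tau\sigma_0\tau^{-1}$ of cycle type $\mu$, and transform $a_{i_1i_{\sigma(1)}}\cdots a_{i_ni_{\sigma(n)}}$ by the substitution $i_j=i'_{\tau^{-1}(j)}$. Using $\tau^{-1}\sigma=\sigma_0\tau^{-1}$, this product becomes the product of the $n$ factors $a_{i'_ki'_{\sigma_0(k)}}$ taken in a permuted order, and restoring their standard order introduces precisely the sign $\operatorname{sgn}(\tau)$, since these factors anticommute pairwise and any monomial with a repeated entry vanishes (as $xx=-xx$ forces $x^2=0$ over $\mathbb{C}$), so the reordering sign rule is valid for each fixed $I$. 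Summing over $I$, equivalently over $I'$, and applying the twist $\chi^{*\lambda}(\sigma)=\operatorname{sgn}(\tau)\chi^{*\lambda}(\sigma_0)$, the two signs cancel, so every $\sigma$ of cycle type $\mu$ contributes equally, giving
\[
   \sum_{I\in[N]^n}\operatorname{imm}^{*\lambda}A_{II}
   =\frac{n!}{\mu_1\cdots\mu_r}\,\chi^{*\lambda}(\sigma_0)
   \sum_{I\in[N]^n}a_{i_1i_{\sigma_0(1)}}\cdots a_{i_ni_{\sigma_0(n)}}.
\]
For $\sigma_0$ the factors are already grouped block by block, in the cyclic order within each cycle and in the decreasing order of the cycle lengths, and distinct blocks involve disjoint summation indices; so by distributivity the last sum equals $\operatorname{tr}(A^{\mu_1})\cdots\operatorname{tr}(A^{\mu_r})$, in exactly the order appearing on the left-hand side of the theorem and with no extra sign. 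Dividing by $n!$ and substituting the value of $\chi^{*\lambda}(\sigma_0)$ from (b) yields $\operatorname{imm}^{*\lambda}_n A=i^{m(\lambda)}(\mu_1\cdots\mu_r)^{-1/2}\operatorname{tr}(A^{\mu_1})\cdots\operatorname{tr}(A^{\mu_r})$, which rearranges to the asserted identity; the case $\lambda=(k+1,1^k)$ is the specialization $h(\lambda)=(2k+1)$, $r=1$, $m(\lambda)=k$.

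The step I expect to demand the most care is the sign accounting in the conjugation reduction: one must verify that reordering the anticommuting factors contributes exactly $\operatorname{sgn}(\tau)$ and that this is precisely what cancels the twist of $\chi^{*\lambda}$, and one must keep in mind that the factors $\operatorname{tr}(A^{\mu_j})$ themselves anticommute (each being a sum of monomials of odd degree), so the order in which they are written is not arbitrary but is fixed by $h(\lambda)$; the ambiguity of $\tau$ modulo the centralizer of $\sigma_0$ is harmless because that centralizer consists of even permutations. Everything else is routine once properties (a) and (b) are granted, and in particular the argument uses neither a commutativity hypothesis on the entries of $A$ nor the Cauchy--Binet formulas of Theorem~\ref{thm(introduction):Cauchy-Binet_for_imm*}.
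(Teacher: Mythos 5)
Your proposal is correct and follows essentially the same route as the paper's proof: expand $\operatorname{imm}^{*\lambda}_n A$, use that $\chi^{*\lambda}$ is supported on the single class $C_{h(\lambda)}$ of size $n!/(\mu_1\cdots\mu_r)$, and evaluate $\sum_{I}a_{i_1 i_{\sigma(1)}}\cdots a_{i_n i_{\sigma(n)}}$ as a signed product $\operatorname{tr}(A^{\mu_1})\cdots\operatorname{tr}(A^{\mu_r})$, the sign cancelling against the twist of $\chi^{*\lambda}$. Your explicit conjugation to the standard representative $\sigma_0$ is just an unpacking of the paper's sign $f(\tau)$ (which coincides with your $\operatorname{sgn}(\tau)$), so the two arguments differ only in presentation.
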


Here, the notation is as follows.
Firstly, $i$ is the imaginary unit.
Secondly, we put
\begin{align*}
   P_{\mathrm{self\text{-}conj}}(n)
   &= \big\{ \lambda \in P(n) \,\big|\, \text{$\lambda$ is self-conjugate} \big\}, \\
   P_{\mathrm{strict, odd}}(n) &= \big\{ (\mu_1,\ldots,\mu_r) \in P(n)
   \,\big|\, r \geq 0, \,\,\,
   \mu_1 > \cdots > \mu_r > 0, \,\,\, 
   \text{$\mu_1,\ldots,\mu_r$: odd} \big\},
\end{align*}
where $P(n)$ is the set of all partitions of~$n$.
Thirdly, $h$ is the bijection from~$P_{\mathrm{self\text{-}conj}}(n)$ to $P_{\text{strict,odd}}(n)$
defined by
\[
   h \colon 
   \lambda \mapsto (2 \lambda_1 - 1, 2 \lambda_2 - 3,\ldots, 2 \lambda_r - (2r-1)).
\]
Here $r$ is the rank of $\lambda$ (namely the length of the main diagonal of $\lambda$).
Finally, we put $m(\lambda) = \frac{1}{2}(n-r)$ (this quantity is always an integer).

Theorem~\ref{thm(introduction):imm*_n_and_tr} is closely related 
to the following Cayley--Hamilton type theorem 
for an $N \times N$ matrix $A$ whose entries anticommute with each other
(Theorem~\ref{thm:anticommuting_Cayley-Hamilton}):
\begin{equation}\label{eq:anticommuting_Cayley-Hamilton}
   N A^{2N-1} 
   - \operatorname{tr}(A) A^{2N-2} 
   - \operatorname{tr}(A^3) A^{2N-4}
   - \cdots 
   - \operatorname{tr}(A^{2N-3}) A^2 
   - \operatorname{tr}(A^{2N-1}) A^0
   = 0.
\end{equation}
This was given recently in~\cite{BPS} and \cite{I2}.
In this relation,
the elements 
\[
   \operatorname{tr}(A),\operatorname{tr}(A^3),\ldots,\operatorname{tr}(A^{2N-1})
\] 
play the role of the coefficients of the characteristic polynomial. 
Thus, it is expected to describe these elements 
in terms of a determinant-type function,
and this is actually achieved in Theorem~\ref{thm(introduction):imm*_n_and_tr}.
This is one of the motivations for this paper.

Compare this theorem with the ordinary Cayley--Hamilton theorem 
for an $N \times N$ matrix $A$ with commuting entries
(Section~3.10 of~\cite{J}):
\[
   \sum_{0 \leq k \leq N} (-1)^k\operatorname{det}_k(A) A^{N-k} = 0.
\]
Here we put $\operatorname{det}_n(A) = \frac{1}{n!} \sum_{I \in [N]^n} \operatorname{det} A_{II}$.

\subsection{{\boldmath$GL(V)$}-invariants in {\boldmath$\Lambda(V \otimes V^*)$}}
We can describe the $GL(V)$-invariants in the exterior algebra $\Lambda(V \otimes V^*)$
using the twisted immanants as follows:

\begin{theorem}
   \label{thm(introduction):basis_of_GL-invariants}\slshape
   Let $V$ be a complex vector space of dimension $N$.
   Then the following forms a linear basis of~$\Lambda(V \otimes V^*)^{GL(V)}$\textnormal{:}
   \[
      \big\{ \operatorname{imm}^{*\lambda}_{|\lambda|} A \,\big|\, 
      \text{$\lambda$ is a self-conjugate partition whose first part is not greater than $N$}
      \big\}.
   \]
   Here we define the matrix $A \in \operatorname{Mat}_{N,N}(\Lambda(V \otimes V^*))$ by
   $A = (a_{ij})_{1 \leq i,j \leq N}$ with
   \[
      a_{ij} = e_i \otimes e^*_j \in V \otimes V^* \subset \Lambda(V \otimes V^*),
   \]
   where $\{ e_1,\ldots,e_N \}$ is a basis of~$V$, 
   and $\{ e^*_1,\ldots,e^*_N \}$ is the dual basis.
   Moreover, $|\lambda|$ is the size of $\lambda$.
\end{theorem}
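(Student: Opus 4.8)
The plan is to identify $\Lambda(V\otimes V^{*})^{GL(V)}$ with a free exterior algebra on the odd elements $\operatorname{tr}(A),\operatorname{tr}(A^{3}),\dots,\operatorname{tr}(A^{2N-1})$, and then to transport the statement to the twisted immanants via Theorem~\ref{thm(introduction):imm*_n_and_tr}. First I would pin down the graded dimension of the invariant space. Identifying $V\otimes V^{*}$ with $\operatorname{End}(V)$, so that the $GL(V)$-action is conjugation, the dual Cauchy identity gives $\Lambda(V\otimes V^{*})\cong\bigoplus_{\lambda}S^{\lambda}(V)\otimes S^{\lambda'}(V^{*})$, the sum over partitions $\lambda$ with $\ell(\lambda)\le N$ and $\lambda_{1}\le N$. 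Since $S^{\lambda'}(V^{*})\cong S^{\lambda'}(V)^{*}$ and the Schur modules $S^{\mu}(V)$ with $\ell(\mu)\le N$ are pairwise non-isomorphic irreducibles, Schur's lemma shows that the $GL(V)$-invariants of the $\lambda$-summand form a line if $\lambda=\lambda'$ and vanish otherwise. Hence, in total degree $d$, the space $\Lambda(V\otimes V^{*})^{GL(V)}$ has dimension equal to the number of self-conjugate partitions of $d$ with first part at most $N$ (recall $\ell(\lambda)=\lambda_{1}$ for self-conjugate $\lambda$), which by the bijection $h$ of the introduction equals the number of strict odd partitions of $d$ with parts at most $2N-1$; summing over $d$, the total dimension is $2^{N}$, matching the cardinality of the proposed set.

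Next I would introduce the comparison map. Each $\operatorname{tr}(A^{2k-1})$ lies in the degree-$(2k-1)$, hence odd, part of $\Lambda(V\otimes V^{*})$, so these elements pairwise anticommute and square to zero, and each is $GL(V)$-invariant because $g$ acts on $\operatorname{End}(V)$ by conjugation and the trace of a power is conjugation-invariant. Thus there is an algebra homomorphism $\Phi\colon\Lambda[\xi_{1},\xi_{3},\dots,\xi_{2N-1}]\to\Lambda(V\otimes V^{*})^{GL(V)}$ from the exterior algebra on $N$ odd generators $\xi_{2k-1}$ (of degree $2k-1$), with $\Phi(\xi_{2k-1})=\operatorname{tr}(A^{2k-1})$. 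By Theorem~\ref{thm(introduction):imm*_n_and_tr}, if $\lambda$ is self-conjugate with $h(\lambda)=(\mu_{1},\dots,\mu_{r})$ then $\operatorname{imm}^{*\lambda}_{|\lambda|} A=c_{\lambda}\,\operatorname{tr}(A^{\mu_{1}})\cdots\operatorname{tr}(A^{\mu_{r}})$ with $c_{\lambda}=i^{m(\lambda)}(\mu_{1}\cdots\mu_{r})^{-1/2}\neq0$; as $\lambda$ runs over self-conjugate partitions with $\lambda_{1}\le N$, the right-hand sides are exactly the $\Phi$-images of the squarefree monomials in $\xi_{1},\xi_{3},\dots,\xi_{2N-1}$, up to nonzero scalars. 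Therefore, once $\Phi$ is shown to be injective, it is an isomorphism for dimension reasons (both sides have dimension $2^{N}$), the monomial basis of the source maps to a basis of $\Lambda(V\otimes V^{*})^{GL(V)}$, and the proposed set --- nonzero scalar multiples of these images --- is a basis as well.

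It remains to prove $\Phi$ injective. Here I would use that $R:=\Lambda[\xi_{1},\xi_{3},\dots,\xi_{2N-1}]$ is a graded Frobenius algebra with one-dimensional socle, spanned by the top monomial $\omega:=\xi_{1}\xi_{3}\cdots\xi_{2N-1}$, so that every nonzero ideal of $R$ contains $\omega$; hence $\ker\Phi=0$ as soon as
\[
   \Phi(\omega)=\operatorname{tr}(A)\,\operatorname{tr}(A^{3})\cdots\operatorname{tr}(A^{2N-1})\neq0
   \qquad\text{in }\Lambda(V\otimes V^{*}).
\]
This nonvanishing is the main obstacle. It concerns one top-degree element, of degree $1+3+\cdots+(2N-1)=N^{2}=\dim(V\otimes V^{*})$, in the Grassmann algebra $\Lambda(V\otimes V^{*})$: expanding $\operatorname{tr}(A^{2k-1})=\sum a_{i_{1}i_{2}}a_{i_{2}i_{3}}\cdots a_{i_{2k-1}i_{1}}$, its coefficient along a fixed ordering of the product of all $N^{2}$ generators $a_{ij}$ is a signed count of the ways to partition the complete directed graph on $N$ vertices (with a loop at every vertex) into closed walks of lengths $1,3,\dots,2N-1$, and one must check these signs do not all cancel; I expect this combinatorial sign analysis to be the technical heart. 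A variant that avoids it is to prove spanning directly: the first fundamental theorem of invariant theory for $GL(V)$, applied to $(V\otimes V^{*})^{\otimes m}$ and projected onto $\Lambda^{m}(V\otimes V^{*})$, shows the invariants are spanned by products of traces $\operatorname{tr}(A^{l})$; using $\operatorname{tr}(A^{2k})=0$ for matrices with anticommuting entries and the Cayley--Hamilton relation~\eqref{eq:anticommuting_Cayley-Hamilton} (traced against powers of $A$) to eliminate even powers and odd powers beyond $2N-1$, this spanning set trims down to the products indexed by self-conjugate $\lambda$ with $\lambda_{1}\le N$, and the dimension count of the first paragraph then forces a basis, with the nonvanishing above dropping out as a corollary.
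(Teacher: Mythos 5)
Your proof is correct in its final form, but note that the paper's own proof is a two-line combination: Theorem~\ref{thm:generators_of_GL-invariants} (quoted from~\cite{I2}) already asserts that the squarefree products $\operatorname{tr}(A^{\mu_1})\cdots\operatorname{tr}(A^{\mu_r})$, $(\mu_1,\ldots,\mu_r)\in P_{\mathrm{strict,odd}}$ with $\mu_1<2N$, form a linear basis of $\Lambda(V\otimes V^*)^{GL(V)}$, and the theorem then follows by the rescaling of Theorem~\ref{thm(introduction):imm*_n_and_tr} --- which is exactly your last step. What you do differently is to re-prove Theorem~\ref{thm:generators_of_GL-invariants} rather than cite it: spanning via the first fundamental theorem applied to $(V\otimes V^*)^{\otimes m}$ (the route the paper attributes to~\cite{I2}), with $\operatorname{tr}(A^{2k})=0$ and the traced Cayley--Hamilton relation (equivalently $A^{2N}=0$) eliminating even powers and odd powers beyond $2N-1$; and linear independence via the skew Cauchy decomposition $\Lambda(V\otimes V^*)\cong\bigoplus_{\lambda}S^{\lambda}(V)\otimes S^{\lambda'}(V^*)$ plus Schur's lemma, which gives the graded dimension $\#\{\lambda\in P_{\mathrm{self\text{-}conj}}(d),\ \lambda_1\le N\}$ and forces the spanning set to be a basis. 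This buys a self-contained argument independent of~\cite{I2}, at the cost of redoing invariant-theoretic work the paper outsources. One caveat: your preferred route through the socle of $\Lambda[\xi_1,\xi_3,\ldots,\xi_{2N-1}]$ is left with an acknowledged hole, namely the nonvanishing of $\operatorname{tr}(A)\operatorname{tr}(A^3)\cdots\operatorname{tr}(A^{2N-1})$, so as written only your ``variant'' is a complete proof; observe also that this nonvanishing is exactly a special case of Theorem~\ref{thm:generators_of_GL-invariants} (the top basis element), so once that statement is available or re-proved as in your variant, the Frobenius-algebra detour is not needed.
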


Note that we can regard the matrix $A$ in this theorem as 
the most generic matrix among the square matrices with anticommuting entries.

\begin{remark}
   We can describe the $O(V)$-invariants in the exterior algebra $\Lambda(\Lambda_2(V))$ in a similar way,
   where $V$ is a finite dimensional complex vector space with a nondegenerate symmetric bilinear form.
   This is easily seen from results in~\cite{I2} and Theorem~\ref{thm(introduction):imm*_n_and_tr}.
\end{remark}

\subsection{Cauchy type identity}
Finally, we have the following relation 
as an analogue of the Cauchy identities on symmetric polynomials.

\begin{theorem}
   \label{thm(introduction):anticommuting_Cauchy_identities}\slshape
   We have
   \begin{align*}
      \operatorname{det}_n (A \otimes B)
      &= \operatorname{per}_n (A \otimes B) \\
      &= \sum_{\lambda \in P_{\mathrm{self\text{-}conj}}(n)} (-1)^{m(\lambda)}
      \operatorname{imm}^{*\lambda}_n A \,
      \operatorname{imm}^{*\lambda}_n B \\
      &= \sum_{(\mu_1,\ldots,\mu_r) \in P_{\mathrm{strict,odd}}(n)} 
      \frac{1}{\mu_1 \cdots \mu_r} 
      \operatorname{tr}(A^{\mu_1}) \cdots \operatorname{tr}(A^{\mu_r})
      \operatorname{tr}(B^{\mu_1}) \cdots \operatorname{tr}(B^{\mu_r}) 
   \end{align*}
   for~$A \in \operatorname{Mat}_{M,M}(\mathcal{A})$ and $B\in \operatorname{Mat}_{N,N}(\mathcal{A})$
   satisfying the following conditions
   \textnormal{(}hence the entries of the Kronecker product $A \otimes B$ commute with each other\textnormal{)}\textnormal{:}
   \begin{itemize}
      \item[(i)]
      the entries of~$A$ anticommute with each other;
      \item[(ii)]
      the entries of~$B$ anticommute with each other;
      \item[(iii)]
      the entries of~$A$ commute with the entries of~$B$.
   \end{itemize}
   Here we define $\operatorname{det}_n X$ and $\operatorname{per}_n X$ for an $N \times N$ matrix $X$ by
   \[
      \operatorname{det}_n X = \frac{1}{n!} \sum_{I \in [N]^n} \operatorname{det} X_{II}, \qquad
      \operatorname{per}_n X = \frac{1}{n!} \sum_{I \in [N]^n} \operatorname{per} X_{II}.
   \]
\end{theorem}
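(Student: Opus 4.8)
The plan is to prove the displayed chain of four equalities from the bottom upward, with the power‑sum expression as the convenient pivot. The starting point is that, by hypothesis~(iii), $(A\otimes B)^{k}=A^{k}\otimes B^{k}$ and hence $\operatorname{tr}((A\otimes B)^{k})=\operatorname{tr}(A^{k})\operatorname{tr}(B^{k})$ for every $k\ge 1$; and that, the entries of $A\otimes B$ commuting with one another, the classical ``cycle--index'' expansions of $\operatorname{det}_{n}$ and $\operatorname{per}_{n}$ through power sums apply to $X=A\otimes B$:
\[
   \operatorname{det}_{n}X=\sum_{\lambda\vdash n}\frac{\varepsilon_{\lambda}}{z_{\lambda}}\prod_{i}\operatorname{tr}(X^{\lambda_{i}}),
   \qquad
   \operatorname{per}_{n}X=\sum_{\lambda\vdash n}\frac{1}{z_{\lambda}}\prod_{i}\operatorname{tr}(X^{\lambda_{i}})
\]
with $\varepsilon_{\lambda}=(-1)^{n-\ell(\lambda)}=\prod_{i}(-1)^{\lambda_{i}-1}$ (these are the symmetric--function identities $\operatorname{det}_{n}=e_{n}$, $\operatorname{per}_{n}=h_{n}$ evaluated on the spectrum of $X$, made precise by Newton's identities as in~\cite{J}). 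Substituting $\operatorname{tr}((A\otimes B)^{k})=\operatorname{tr}(A^{k})\operatorname{tr}(B^{k})$ turns each of $\operatorname{det}_{n}(A\otimes B)$ and $\operatorname{per}_{n}(A\otimes B)$ into a signed sum over $\lambda\vdash n$ of $z_{\lambda}^{-1}\prod_{i}\operatorname{tr}(A^{\lambda_{i}})\operatorname{tr}(B^{\lambda_{i}})$.

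The heart of the argument is a vanishing step that uses~(i) and~(ii). If some part $\lambda_{i}$ is \emph{even} then $\operatorname{tr}(A^{\lambda_{i}})=0$: writing $\operatorname{tr}(A^{2m})$ as a sum over closed walks of length $2m$, the $2m$ cyclic rotations of an aperiodic walk contribute a fixed monomial with the sign $(-1)^{j}$, $j=0,\dots,2m-1$, and hence sum to zero, while a walk of period $d<2m$ contributes $u^{2m/d}$ with $u$ a product of $d$ entries of $A$ and $u^{2}=0$, $2m/d\ge2$ (cf.~\cite{I2}). If $\lambda$ has a \emph{repeated} part $\lambda_{i}=\lambda_{j}=q$, then $q$ is odd by the preceding remark, and the corresponding term contains the factor $\operatorname{tr}(A^{q})^{2}\operatorname{tr}(B^{q})^{2}$, which is $0$ because $\operatorname{tr}(A^{q})$ is a sum of products of $q$ anticommuting entries, so $xx'=-x'x$ for any two of its monomials and each such monomial squares to zero. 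Hence only partitions $\lambda=(\mu_{1},\dots,\mu_{r})$ into \emph{distinct odd} parts survive; for these $\varepsilon_{\lambda}=1$ and $z_{\lambda}=\mu_{1}\cdots\mu_{r}$, so simultaneously $\operatorname{det}_{n}(A\otimes B)=\operatorname{per}_{n}(A\otimes B)$ and both equal $\sum_{(\mu_{1},\dots,\mu_{r})\in P_{\mathrm{strict,odd}}(n)}\tfrac{1}{\mu_{1}\cdots\mu_{r}}\operatorname{tr}(A^{\mu_{1}})\cdots\operatorname{tr}(A^{\mu_{r}})\operatorname{tr}(B^{\mu_{1}})\cdots\operatorname{tr}(B^{\mu_{r}})$, which is the last line of the statement.

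It remains to recover the middle line, and this is where Theorem~\ref{thm(introduction):imm*_n_and_tr} enters. Since both $A$ and $B$ have anticommuting entries, for $\lambda\in P_{\mathrm{self\text{-}conj}}(n)$ with $h(\lambda)=(\mu_{1},\dots,\mu_{r})$ that theorem gives $\operatorname{tr}(A^{\mu_{1}})\cdots\operatorname{tr}(A^{\mu_{r}})=i^{-m(\lambda)}\sqrt{\mu_{1}\cdots\mu_{r}}\,\operatorname{imm}^{*\lambda}_{n}A$ and likewise for $B$; substituting both and using $i^{-2m(\lambda)}=(-1)^{m(\lambda)}$ gives $\tfrac{1}{\mu_{1}\cdots\mu_{r}}\operatorname{tr}(A^{\mu_{1}})\cdots\operatorname{tr}(A^{\mu_{r}})\operatorname{tr}(B^{\mu_{1}})\cdots\operatorname{tr}(B^{\mu_{r}})=(-1)^{m(\lambda)}\operatorname{imm}^{*\lambda}_{n}A\,\operatorname{imm}^{*\lambda}_{n}B$. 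As $h$ is a bijection from $P_{\mathrm{self\text{-}conj}}(n)$ onto $P_{\mathrm{strict,odd}}(n)$, summing over $\lambda$ identifies the last line with $\sum_{\lambda\in P_{\mathrm{self\text{-}conj}}(n)}(-1)^{m(\lambda)}\operatorname{imm}^{*\lambda}_{n}A\,\operatorname{imm}^{*\lambda}_{n}B$, closing the chain.

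The main obstacle is not any single computation but the vanishing lemma $\operatorname{tr}(A^{2m})=0$ for matrices with anticommuting entries, together with the care needed to be sure that the symmetric--function identities for $\operatorname{det}_{n}$ and $\operatorname{per}_{n}$ -- originally statements about matrices over a field -- may legitimately be applied to $A\otimes B$, whose entries merely commute with one another (reducing, if one wishes, to the generic case in which the entries of $A$ and $B$ generate exterior algebras makes both points transparent); once these are in hand, everything else is sign bookkeeping and an appeal to Theorem~\ref{thm(introduction):imm*_n_and_tr}.
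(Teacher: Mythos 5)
Your argument is correct, but it reaches the trace/power-sum line by a genuinely different mechanism than the paper. The paper's proof expands $\operatorname{det}_n(A\otimes B)$ and $\operatorname{per}_n(A\otimes B)$ directly over index tuples and permutations and evaluates, for each $\sigma$ of cycle type $\mu$, the sums $\sum_{I} a_{i_1 i_{\sigma(1)}}\cdots a_{i_n i_{\sigma(n)}} = f(\sigma)\operatorname{tr}(A^{\mu_1})\cdots\operatorname{tr}(A^{\mu_r})$ and likewise for $B$, so that the subtle sign $f(\sigma)$ occurs in both factors and cancels; the count $|C_{\mu}|=n!/z_{\mu}$ then produces the coefficient $1/(\mu_1\cdots\mu_r)$, and $\operatorname{sgn}\equiv 1$ on the surviving even permutations gives $\operatorname{det}_n=\operatorname{per}_n$ at the same stroke. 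You instead apply the classical expansions of $e_n$ and $h_n$ in power sums to the commuting-entry matrix $A\otimes B$, use $(A\otimes B)^k=A^k\otimes B^k$ (which is where hypothesis (iii) enters), and then kill every partition outside $P_{\mathrm{strict,odd}}(n)$ by the two vanishing facts $\operatorname{tr}(A^{2m})=0$ (this is exactly Lemma~\ref{lem:trace_of_A^2k}, which you could simply cite rather than reprove, though your rotation argument is sound) and $\operatorname{tr}(A^{q})^2=0$ for odd $q$; your approach avoids $f(\sigma)$ and the per-permutation bookkeeping entirely, at the cost of the (routine, and correctly flagged by you) justification that the symmetric-function identities, a priori statements about complex matrices, extend as polynomial identities to matrices with commuting entries in a $\mathbb{C}$-algebra. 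Your explicit treatment of repeated odd parts is a point the paper's proof leaves implicit in its appeal to Lemma~\ref{lem:trace_of_A^2k}. The final step is the same in both arguments: Theorem~\ref{thm(introduction):imm*_n_and_tr} converts each term $\tfrac{1}{\mu_1\cdots\mu_r}\operatorname{tr}(A^{\mu_1})\cdots\operatorname{tr}(A^{\mu_r})\operatorname{tr}(B^{\mu_1})\cdots\operatorname{tr}(B^{\mu_r})$ into $(-1)^{m(\lambda)}\operatorname{imm}^{*\lambda}_n A\,\operatorname{imm}^{*\lambda}_n B$ through the bijection $h$ (the paper's closing reference to Theorem~\ref{thm(introduction):basis_of_GL-invariants} at that point is evidently intended to be Theorem~\ref{thm(introduction):imm*_n_and_tr}, which is precisely what you invoke), so your proof closes the chain correctly.
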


\subsection{Related matters}\label{subsec:related_matters}
We note some more related matters.
First the Cayley--Hamilton type theorem (\ref{eq:anticommuting_Cayley-Hamilton})
can be regarded as a refinement of the Amitsur--Levitzki theorem 
(see \cite{AL}, \cite{K1}, \cite{K2}, \cite{R} for the Amitsur--Levitzki theorem).
Indeed we see $A^{2N} = 0$ from (\ref{eq:anticommuting_Cayley-Hamilton}),
and the Amitsur--Levitzki theorem is immediate from this relation
(see \cite{I2}, \cite{P} for the detail).
Moreover we can identify the algebra $\Lambda(V \otimes V^*)^{GL(V)}$ 
with the cohomology ring of the Lie algebra $\mathfrak{gl}(V)$ (Chapter~10 of~\cite{Me}).
We note that Kostant gave a proof of the Amitsur--Levitzki theorem
using the cohomology ring of the Lie algebra $\mathfrak{gl}(V)$ 
and the function $\chi^{*\lambda}$ in \cite{K1}.
In this sense, basic ideas of the present article can be found in this paper due to Kostant.

%
\section{Ordinary immanant}
%
%
First of all, we recall the ordinary immanant and its properties.
The immanant was first defined by Littlewood and Richardson \cite{LR}.
The main reference of this section is Section~6 of~\cite{I1}.

\subsection{Definition of the immanant}
Let us start with the definition.

Fix an associative $\mathbb{C}$-algebra $\mathcal{A}$,
and denote by~$\operatorname{Mat}_{m,n}(\mathcal{A})$ the set of all $m \times n$ matrices
whose entries are elements of~$\mathcal{A}$.
We consider a matrix $A = (a_{ij})_{1 \leq i,j \leq n} \in \operatorname{Mat}_{n,n}(\mathcal{A})$.

Fix a partition $\lambda$ of~$n$.
We define the immanant associated to~$\lambda$ for~$A$ by
\[
   \operatorname{imm}^{\lambda} A
   = \sum_{\tau \in \mathfrak{S}_n} \chi^{\lambda} (\tau) 
   a_{1\tau(1)} \cdots a_{n\tau(n)}.
\]
Here $\chi^{\lambda}$ is the irreducible character of 
the symmetric group $\mathfrak{S}_n$ determined by~$\lambda$.
We can regard this immanant 
as a natural generalization of the determinant and the permanent,
because 
\[
   \operatorname{imm}^{(1^n)} = \det, \qquad
   \operatorname{imm}^{(n)} = \operatorname{per}.
\]

When the matrix entries commute with each other
(for example when $\mathcal{A}$ is commutative),
we can express the immanant in the following various ways:

\begin{proposition}\label{prop:various_expressions_of_imm}\slshape
   When the entries of~$A$ commute with each other,
   we have
   \begin{align*}
      \operatorname{imm}^{\lambda} A
      &= \sum_{\tau \in \mathfrak{S}_n} \chi^{\lambda} (\tau) 
      a_{1\tau(1)} \cdots a_{n\tau(n)} 
      \allowdisplaybreaks\\
      &= \sum_{\sigma \in \mathfrak{S}_n} \chi^{\lambda} (\sigma^{-1}) 
      a_{\sigma(1)1} \cdots a_{\sigma(n)n} 
      \allowdisplaybreaks\\
      &= \frac{\chi^{\lambda}(1)}{n!}
      \sum_{\sigma, \tau \in \mathfrak{S}_n} 
      \chi^{\lambda}(\sigma^{-1}) \chi^{\lambda}(\tau) 
      a_{\sigma(1)\tau(1)} \cdots a_{\sigma(n)\tau(n)} 
      \allowdisplaybreaks\\
      & = \frac{1}{n!}
      \sum_{\sigma, \tau \in \mathfrak{S}_n} 
      \chi^{\lambda} (\tau \sigma^{-1}) 
      a_{\sigma(1)\tau(1)} \cdots a_{\sigma(n)\tau(n)}.
   \end{align*}
\end{proposition}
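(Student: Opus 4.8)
The plan is to establish the four displayed equalities one at a time, starting from the definition and using only two ingredients: the hypothesis that the entries of~$A$ commute, and the standard facts about irreducible characters of~$\mathfrak{S}_n$ (real-valuedness, i.e. $\chi^{\lambda}(\sigma)=\chi^{\lambda}(\sigma^{-1})$, the column orthogonality relations, and the fact that $\chi^{\lambda}$ is a class function). The first equality is just the definition, so there is nothing to prove there. For the second equality, I would start from the definition $\sum_{\tau}\chi^{\lambda}(\tau)a_{1\tau(1)}\cdots a_{n\tau(n)}$ and substitute $\sigma=\tau^{-1}$; since the entries commute, the product $a_{1\tau(1)}\cdots a_{n\tau(n)}$ may be reindexed, writing $j=\tau(i)$ so that $i=\tau^{-1}(j)=\sigma(j)$, which turns the product into $a_{\sigma(1)1}\cdots a_{\sigma(n)n}$; combined with $\chi^{\lambda}(\tau)=\chi^{\lambda}(\tau^{-1})=\chi^{\lambda}(\sigma^{-1})$ this gives the second line. (In fact one does not even need real-valuedness here: replacing $\sigma$ by $\sigma^{-1}$ in the new sum absorbs the inverse, but it is cleanest to keep $\chi^{\lambda}(\sigma^{-1})$ as written.)

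For the third equality I would insert a sum over a second permutation using the first orthogonality relation for columns of the character table, in the form
\[
   \frac{1}{n!}\sum_{\rho\in\mathfrak{S}_n}\chi^{\lambda}(\rho\sigma\rho^{-1})
   =\frac{\chi^{\lambda}(\sigma)}{\chi^{\lambda}(1)}\sum_{\rho}\ \text{(suitably normalized)},
\]
or more directly the relation $\chi^{\lambda}(\sigma)=\frac{\chi^{\lambda}(1)}{n!}\sum_{\rho\in\mathfrak{S}_n}\chi^{\lambda}(\rho^{-1})\chi^{\lambda}(\rho\sigma)$, which is a rewriting of the fact that the normalized character is a central idempotent (equivalently, the primitive central idempotent $e_{\lambda}=\frac{\chi^{\lambda}(1)}{n!}\sum_{\rho}\chi^{\lambda}(\rho^{-1})\rho$ satisfies $\chi^{\lambda}(e_{\lambda}g)=\chi^{\lambda}(g)$). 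Applying this identity to $\sigma=\tau\in\mathfrak{S}_n$ inside the second expression, then renaming $\rho\tau$ as the new summation variable and using commutativity of the entries once more to reindex the monomial, produces the double sum $\frac{\chi^{\lambda}(1)}{n!}\sum_{\sigma,\tau}\chi^{\lambda}(\sigma^{-1})\chi^{\lambda}(\tau)a_{\sigma(1)\tau(1)}\cdots a_{\sigma(n)\tau(n)}$. The fourth equality then follows from the third by absorbing one factor of $\chi^{\lambda}$: using $\chi^{\lambda}(\sigma^{-1})\chi^{\lambda}(\tau)$ and the same idempotent identity in the reverse direction, or—more transparently—by writing $\chi^{\lambda}(\tau\sigma^{-1})=\frac{\chi^{\lambda}(1)}{n!}\sum_{\rho}\chi^{\lambda}(\rho\sigma^{-1})\chi^{\lambda}(\rho^{-1}\tau)$, wait—cleaner is to go the other way: start from the fourth line, substitute $\rho=\tau\sigma^{-1}$ as a new variable, sum over $\sigma$, and collapse using orthogonality to recover the second line; since all steps are reversible this is equivalent to proving the fourth from the second or third.

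The only genuine subtlety, and the step I would treat most carefully, is the repeated use of commutativity to reindex the monomials $a_{\sigma(1)\tau(1)}\cdots a_{\sigma(n)\tau(n)}$ under simultaneous relabeling of $\sigma$ and $\tau$: one must check that replacing $(\sigma,\tau)$ by $(\rho\sigma,\rho\tau)$ or by $(\sigma\rho,\tau\rho)$ sends the monomial to another monomial appearing in the sum, and that the Jacobian of the substitution on $\mathfrak{S}_n\times\mathfrak{S}_n$ is trivial—both of which are immediate once the bookkeeping is set up, but which silently fail without the commutativity hypothesis. Everything else is formal manipulation with the orthogonality relations. I expect the whole argument to be short; the main obstacle is purely notational, namely choosing the substitution variables so that the reindexing of the product matches the reindexing of the character factors cleanly.
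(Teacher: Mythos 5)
Your proposal is correct and takes essentially the same route as the paper: the paper derives all four expressions from exactly the ingredients you use --- the orthogonality relation $\frac{1}{n!}\sum_{\tau}\chi^{\lambda}(\sigma\tau^{-1})\chi^{\lambda}(\tau)=\chi^{\lambda}(\sigma)/\chi^{\lambda}(1)$, the class-function property, real-valuedness of $\chi^{\lambda}$, and commutativity of the entries for reindexing the monomials --- merely citing these relations and leaving the bookkeeping to \cite{I1}. Your filled-in details are sound; the only minor slips (the collapse from the fourth line back to a single sum needs no orthogonality, just the free sum over $\sigma$, and the monomial is invariant under the right translation $(\sigma,\tau)\mapsto(\sigma\rho,\tau\rho)$ rather than the left one) do not affect correctness.
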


This proposition is easy from the following relations 
for the irreducible characters (see Section~6 of~\cite{I1} for the detail):
\begin{equation}
\label{eq:relations_for_chi}
   \frac{1}{|\mathfrak{S}_n|} \sum_{\tau \in \mathfrak{S}_n} 
   \chi^{\lambda}(\sigma\tau^{-1}) 
   \chi^{\mu}(\tau)
   =\delta_{\lambda\mu}
   \frac{\chi^{\lambda}(\sigma)}{\chi^{\lambda}(1)}, \qquad
   \chi^{\lambda}(\tau\sigma\tau^{-1})
   = \chi^{\lambda}(\sigma), \qquad
   \chi^{\lambda}(\sigma^{-1}) = \chi^{\lambda}(\sigma).
\end{equation}
Here the first two relations hold in general for any finite group,
and the third holds 
because the irreducible representations of~$\mathfrak{S}_n$ are all real-valued.

The four expressions in Proposition~\ref{prop:various_expressions_of_imm} do not coincide in general,
unless the entries commute with each other.
To distinguish the first and second expressions, we often write them as 
\begin{align*}
   \operatorname{row-imm}^{\lambda} A 
   & = \sum_{\tau \in \mathfrak{S}_n} \chi^{\lambda} (\tau) a_{1\tau(1)} \cdots a_{n\tau(n)},
   \allowdisplaybreaks\\
   \operatorname{column-imm}^{\lambda} A
   & = \sum_{\sigma \in \mathfrak{S}_n} \chi^{\lambda} (\sigma^{-1}) a_{\sigma(1)1} \cdots a_{\sigma(n)n}. 
\end{align*}

\subsection{Cauchy--Binet type formula}
The immanant satisfies the following analogue of the Cauchy--Binet formula
(the case of~$\lambda = (1^n)$ is equal to the ordinary Cauchy--Binet formula for the determinant):

\begin{proposition}\label{prop:Cauchy-Binet_for_imm}\slshape
   Consider $A \in \operatorname{Mat}_{L,M}(\mathcal{A})$ 
   and $B \in \operatorname{Mat}_{M,N}(\mathcal{A})$,
   and assume that the entries of~$A$ and $B$ commute with each other.
   Then, for~$I \in [L]^n$ and $K \in [N]^n$,
   we have 
   \[
      \operatorname{imm}^{\lambda}(AB)_{IK} 
	  = \frac{\chi^{\lambda}(1)}{n!} \sum_{J \in [M]^n}
      \operatorname{imm}^{\lambda}A_{IJ} \,
      \operatorname{imm}^{\lambda}B_{JK}.
   \]
\end{proposition}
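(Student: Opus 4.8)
The plan is to expand both sides as sums of products of entries of $A$ and $B$, and then to match them using the orthogonality relation in \eqref{eq:relations_for_chi} for the irreducible characters.

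First I would reduce to the case $I = K = (1,\ldots,n)$. Replacing $A$ by the $n \times M$ matrix whose $s$-th row is the $i_s$-th row of $A$, and $B$ by the $M \times n$ matrix whose $t$-th column is the $k_t$-th column of $B$, leaves the three matrices $A_{IJ}$, $B_{JK}$, $(AB)_{IK}$ appearing in the identity unchanged and preserves the commutativity hypothesis. So it suffices to prove
\[
   \operatorname{imm}^{\lambda}(AB)
   = \frac{\chi^{\lambda}(1)}{n!} \sum_{J \in [M]^n}
   \operatorname{imm}^{\lambda} A_{IJ} \, \operatorname{imm}^{\lambda} B_{JK}
\]
for $A \in \operatorname{Mat}_{n,M}(\mathcal{A})$, $B \in \operatorname{Mat}_{M,n}(\mathcal{A})$ and $I = K = (1,\ldots,n)$, where I now write $a_{ij}$, $b_{ij}$ for the entries. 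Expanding the left-hand side by substituting $(AB)_{s\tau(s)} = \sum_{j_s=1}^{M} a_{sj_s} b_{j_s\tau(s)}$ into $\operatorname{imm}^{\lambda}(AB) = \sum_{\tau} \chi^{\lambda}(\tau)\,(AB)_{1\tau(1)} \cdots (AB)_{n\tau(n)}$, multiplying out the product over $s$, and then using that every entry of $A$ commutes with every entry of $B$ to collect the $a$'s in front of the $b$'s, I obtain
\[
   \operatorname{imm}^{\lambda}(AB)
   = \sum_{\tau \in \mathfrak{S}_n} \sum_{J \in [M]^n}
   \chi^{\lambda}(\tau)\,(a_{1j_1} \cdots a_{nj_n})(b_{j_1\tau(1)} \cdots b_{j_n\tau(n)}).
\]

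For the right-hand side I would write $\operatorname{imm}^{\lambda} A_{IJ} = \sum_{\sigma} \chi^{\lambda}(\sigma)\,a_{1j_{\sigma(1)}} \cdots a_{nj_{\sigma(n)}}$ and $\operatorname{imm}^{\lambda} B_{JK} = \sum_{\rho} \chi^{\lambda}(\rho)\,b_{j_1\rho(1)} \cdots b_{j_n\rho(n)}$, multiply these out, and then carry out, for each fixed $\sigma$, the substitution $(j_1,\ldots,j_n) \mapsto (j_{\sigma^{-1}(1)},\ldots,j_{\sigma^{-1}(n)})$, which is a bijection of $[M]^n$. This turns the $a$-product into $a_{1j_1} \cdots a_{nj_n}$ and the $b$-product into $b_{j_{\sigma^{-1}(1)}\rho(1)} \cdots b_{j_{\sigma^{-1}(n)}\rho(n)}$; reordering the latter so that its first indices read $j_1,\ldots,j_n$ — legitimate because the entries of $B$ commute with each other — rewrites it as $b_{j_1\rho\sigma(1)} \cdots b_{j_n\rho\sigma(n)}$. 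Putting $\tau = \rho\sigma$ (so $\rho = \tau\sigma^{-1}$), the right-hand side becomes
\[
   \frac{\chi^{\lambda}(1)}{n!} \sum_{\tau \in \mathfrak{S}_n} \sum_{J \in [M]^n}
   \Bigl( \sum_{\sigma \in \mathfrak{S}_n} \chi^{\lambda}(\tau\sigma^{-1}) \chi^{\lambda}(\sigma) \Bigr)
   (a_{1j_1} \cdots a_{nj_n})(b_{j_1\tau(1)} \cdots b_{j_n\tau(n)}).
\]
By the first relation in \eqref{eq:relations_for_chi} with $\mu = \lambda$, the inner sum over $\sigma$ equals $n!\,\chi^{\lambda}(\tau)/\chi^{\lambda}(1)$, so the prefactor cancels and the expression coincides with the one found above for $\operatorname{imm}^{\lambda}(AB)$, which proves the identity.

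I expect the computation to be routine; the only points requiring care are the two appeals to commutativity of the entries — the $A$-with-$B$ commutativity used to split each monomial of $AB$ into an $A$-part times a $B$-part, and the commutativity among the entries of $B$ used to reorder the $B$-product after the reindexing — together with careful bookkeeping of the substitution $j_s \mapsto j_{\sigma^{-1}(s)}$. I do not foresee any substantial obstacle.
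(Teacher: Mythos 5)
Your argument is correct and is essentially the paper's proof run in the opposite direction: both hinge on the first orthogonality relation in (\ref{eq:relations_for_chi}), which the paper invokes through the doubly symmetrized expression of Proposition~\ref{prop:various_expressions_of_imm} applied to the left-hand side (splitting it into $\operatorname{column-imm}^{\lambda}A_{IJ}\,\operatorname{row-imm}^{\lambda}B_{JK}$), while you apply it directly after reindexing the right-hand side. Your preliminary reduction to $I=K=(1,\ldots,n)$ is harmless but unnecessary, since the same computation goes through verbatim for general $I$ and $K$.
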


\begin{proof}
   We denote the $(i,j)$th entries of~$A$ and $B$ by~$a_{ij}$ and $b_{ij}$, respectively.
   The following calculation using Proposition~\ref{prop:various_expressions_of_imm}
   leads us to the assertion:
   \begin{align*}
      & \operatorname{imm}^{\lambda}(AB)_{IK} \\
      & \qquad
      = \frac{\chi^{\lambda}(1)}{n!} \sum_{\sigma,\tau \in \mathfrak{S}_n} 
      \chi^{\lambda}(\sigma^{-1}) \chi^{\lambda}(\tau)
      (AB)_{i_{\sigma(1)} k_{\tau(1)}} \cdots 
      (AB)_{i_{\sigma(n)} k_{\tau(n)}} 
      \allowdisplaybreaks\\
      & \qquad
      = \frac{\chi^{\lambda}(1)}{n!} \sum_{\sigma,\tau \in \mathfrak{S}_n} 
      \sum_{J \in [M]^n}
      \chi^{\lambda}(\sigma^{-1}) \chi^{\lambda}(\tau)
      a_{i_{\sigma(1)} j_1} b_{j_1 k_{\tau(1)}} 
      \cdots 
      b_{i_{\sigma(n)} j_n} b_{j_n k_{\tau(n)}} 
      \allowdisplaybreaks\\
      & \qquad
      = \frac{\chi^{\lambda}(1)}{n!}
      \sum_{J \in [M]^n} 
      \sum_{\sigma \in \mathfrak{S}_n} 
      \chi^{\lambda}(\sigma^{-1}) 
      a_{i_{\sigma(1)} j_1} \cdots a_{i_{\sigma(n)} j_n}
      \sum_{\tau \in \mathfrak{S}_n} 
      \chi^{\lambda}(\tau)
      b_{j_1 k_{\tau(1)}} \cdots b_{j_n k_{\tau(n)}} 
      \allowdisplaybreaks\\
      & \qquad
      = \frac{\chi^{\lambda}(1)}{n!} 
      \sum_{J \in [M]^n}
      \operatorname{column-imm}^{\lambda}A_{IJ} \,
      \operatorname{row-imm}^{\lambda}B_{JK}. 
   \end{align*}
   Here we denote $(j_1,\ldots,j_n)$ simply by~$J$
   (from now on, we will often use this notation). 
\end{proof}

\subsection{Invariance under permutations}
The immanant has some invariance properties under the permutations of rows and columns. 
Let us put $A^{\sigma} = (a_{\sigma(i)\sigma(j)})_{1 \leq i,j \leq n}$ 
for~$A = (a_{ij})_{1 \leq i,j \leq n}$ and $\sigma \in \mathfrak{S}_n$.
Then, we have 
\[
   \operatorname{imm}^{\lambda} A^{\sigma}
   = \operatorname{imm}^{\lambda} A,
\]
when the entries of~$A$ commute with each other.
Moreover, we have
\begin{align}
\label{eq:imm_for_matrices_with_anticommuting_entries}
   \operatorname{row-imm}^{\lambda}A^{\sigma}
   &= \operatorname{sgn}(\sigma)\operatorname{row-imm}^{\lambda} A, \\
\notag
   \operatorname{column-imm}^{\lambda}A^{\sigma}
   &= \operatorname{sgn}(\sigma)\operatorname{column-imm}^{\lambda} A,
\end{align}
when the entries of~$A$ anticommute with each other.
These relations are immediate from the second relation in~(\ref{eq:relations_for_chi}).

\subsection{A sum of immanants}
We introduce a function defined as a sum of immanants.
We put
\[
   \operatorname{imm}^{\lambda}_n A
   = \frac{1}{n!} \sum_{I \in [N]^n} \operatorname{row-imm}^{\lambda} A_{II}
   = \frac{1}{n!} \sum_{I \in [N]^n} \operatorname{column-imm}^{\lambda} A_{II}.
\]
Here the second equality is seen by a simple calculation.
This function is invariant under the conjugation by~$GL_N(\mathbb{C})$:

\begin{proposition}\label{prop:invariance_of_imm_n}\slshape
   For any $g \in GL_N(\mathbb{C})$, we have
   $\operatorname{imm}^{\lambda}_n gAg^{-1}
   = \operatorname{imm}^{\lambda}_n A$.   
\end{proposition}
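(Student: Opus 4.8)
The plan is to reduce the invariance statement $\operatorname{imm}^{\lambda}_n\,gAg^{-1} = \operatorname{imm}^{\lambda}_n A$ to the Cauchy--Binet formula of Proposition~\ref{prop:Cauchy-Binet_for_imm}, applied twice. First I would observe that, although $\mathcal{A}$ need not be commutative, the entries of $g$ and $g^{-1}$ are scalars in $\mathbb{C}$, hence they commute with everything in $\mathcal{A}$; so the hypotheses of Proposition~\ref{prop:Cauchy-Binet_for_imm} are satisfied whenever one of the factors in a product is a matrix over $\mathbb{C}$. Write $B = gAg^{-1}$, so that $B_{II} = (gAg^{-1})_{II}$ for $I \in [N]^n$. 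Expanding first $gA$ times $g^{-1}$ and then $g$ times $Ag^{-1}$ via Cauchy--Binet, I get
\[
   \operatorname{imm}^{\lambda} B_{II}
   = \frac{\chi^{\lambda}(1)^2}{(n!)^2}
   \sum_{J, K \in [N]^n}
   \operatorname{imm}^{\lambda} g_{IJ}\,
   \operatorname{imm}^{\lambda} A_{JK}\,
   \operatorname{imm}^{\lambda} (g^{-1})_{KI},
\]
where I am using $\operatorname{imm}^{\lambda}$ to denote the common value $\operatorname{row-imm}^{\lambda} = \operatorname{column-imm}^{\lambda}$ in the commuting-entry setting; strictly one should keep track of row- vs.\ column-immanants as in the proof of Proposition~\ref{prop:Cauchy-Binet_for_imm}, but since the outer two factors are over $\mathbb{C}$ this bookkeeping is harmless.

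Next I would sum over $I \in [N]^n$ and divide by $n!$ to form $\operatorname{imm}^{\lambda}_n B$. The point is that the sum over $I$ collapses the two $g$-factors: by Cauchy--Binet again, or directly from the definition, $\frac{1}{n!}\sum_{I}\operatorname{imm}^{\lambda}(g^{-1})_{KI}\operatorname{imm}^{\lambda} g_{IJ}$ computes $\frac{1}{\chi^{\lambda}(1)}\operatorname{imm}^{\lambda}(g^{-1}g)_{KJ} = \frac{1}{\chi^{\lambda}(1)}\operatorname{imm}^{\lambda}(\mathbf{1})_{KJ}$, where $\mathbf{1}$ is the $N\times N$ identity matrix. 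So the whole expression becomes
\[
   \operatorname{imm}^{\lambda}_n B
   = \frac{\chi^{\lambda}(1)}{(n!)^2}
   \sum_{J, K \in [N]^n}
   \operatorname{imm}^{\lambda} A_{JK}\,
   \operatorname{imm}^{\lambda} (\mathbf{1})_{KJ}.
\]
It remains to evaluate $\operatorname{imm}^{\lambda}(\mathbf{1})_{KJ}$: for $K = (k_1,\dots,k_n)$ and $J = (j_1,\dots,j_n)$, this is $\sum_{\tau}\chi^{\lambda}(\tau)\,\delta_{k_1 j_{\tau(1)}}\cdots\delta_{k_n j_{\tau(n)}}$, which is nonzero only when $J$ is a rearrangement of $K$; a short count, using that $\chi^{\lambda}$ is a class function, shows $\frac{1}{n!}\sum_K \operatorname{imm}^{\lambda}A_{JK}\operatorname{imm}^{\lambda}(\mathbf{1})_{KJ}$ reduces to $\chi^{\lambda}(1)\operatorname{imm}^{\lambda}A_{JJ}/(\text{normalization})$, recovering $\operatorname{imm}^{\lambda}_n A$ after the final sum over $J$ and division by $n!$.

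The main obstacle is the careful handling of the multi-index sums and the normalization constants — in particular making sure the row/column-immanant distinction is tracked correctly through the two applications of Cauchy--Binet (as in the proof of Proposition~\ref{prop:Cauchy-Binet_for_imm}, where one factor naturally comes out as a column-immanant and the other as a row-immanant), and verifying that the factors of $\chi^{\lambda}(1)$ and $n!$ balance so that no spurious constant survives. An alternative, cleaner route that avoids most of this bookkeeping is to prove directly that $\operatorname{imm}^{\lambda}_n$ is, up to a known constant, a specialization of a symmetric function in the eigenvalue-type data of $A$ — but since the entries of $A$ need not commute, the safest elementary argument is the Cauchy--Binet telescoping sketched above, and I would present it in that form.
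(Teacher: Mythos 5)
There is a genuine gap: your argument only establishes the statement when the entries of $A$ commute with each other, whereas Proposition~\ref{prop:invariance_of_imm_n} is asserted (and emphasized in the paper, with the proof deferred to Section~6 of~[I1]) precisely ``even if the entries of~$A$ do not commute with each other''; the commuting case is the one the paper already dismisses as immediate from Proposition~\ref{prop:Cauchy-Binet_for_imm}. Your key claim that the hypotheses of Proposition~\ref{prop:Cauchy-Binet_for_imm} are met ``whenever one of the factors is a matrix over $\mathbb{C}$'' is not correct: the hypothesis there, as used in its proof, is that all entries involved commute pairwise, because the first step rewrites $\operatorname{imm}^{\lambda}(AB)_{IK}$ by the symmetrized expression of Proposition~\ref{prop:various_expressions_of_imm}, which requires the entries of the \emph{product} to commute with each other. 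The entries of $gA$, $Ag^{-1}$ and $gAg^{-1}$ are complex linear combinations of the entries of $A$ and inherit their non-commutativity, so this step is unavailable. Indeed your first displayed identity is false termwise for non-commuting $A$: take $n=2$, $\lambda=(1^2)$, $g=\mathbf{1}$ and $I=(i_1,i_2)$ with $i_1\neq i_2$; the right-hand side evaluates to $\tfrac12\bigl(a_{i_1i_1}a_{i_2i_2}+a_{i_2i_2}a_{i_1i_1}-a_{i_1i_2}a_{i_2i_1}-a_{i_2i_1}a_{i_1i_2}\bigr)$, which differs from $\det A_{II}=a_{i_1i_1}a_{i_2i_2}-a_{i_1i_2}a_{i_2i_1}$ by commutator terms. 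Only after averaging over $I$ does the invariance hold, so the termwise Cauchy--Binet telescoping cannot be the mechanism in the general case.

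The argument that does work in full generality is the direct computation the paper carries out for the twisted analogue (the Lemma proving $\operatorname{imm}^{*\lambda}_n gA=\operatorname{imm}^{*\lambda}_n Ag$): expand $\frac{1}{n!}\sum_{I}\operatorname{column-imm}^{\lambda}(gA)_{II}$, move only the scalar entries $g_{ij}\in\mathbb{C}$ (which commute with everything in $\mathcal{A}$), reindex the summation, and recognize $\frac{1}{n!}\sum_{J}\operatorname{row-imm}^{\lambda}(Ag)_{JJ}$; this yields $\operatorname{imm}^{\lambda}_n(gA)=\operatorname{imm}^{\lambda}_n(Ag)$ for any $g\in\operatorname{Mat}_{N,N}(\mathbb{C})$ without ever permuting two entries of $A$ past each other, and the proposition follows by replacing $A$ with $Ag^{-1}$. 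In the commuting case your route is fine, though it can be shortened: one application of Proposition~\ref{prop:Cauchy-Binet_for_imm} already gives $\operatorname{imm}^{\lambda}_n(XY)=\operatorname{imm}^{\lambda}_n(YX)$, from which conjugation invariance is immediate, with no need to evaluate $\operatorname{imm}^{\lambda}(\mathbf{1})_{KJ}$.
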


This holds even if the entries of~$A$ do not commute with each other.
This proposition is immediate from Proposition~\ref{prop:Cauchy-Binet_for_imm}
when the entries commute with each other.

We also note the following relation with the eigenvalues:

\begin{proposition}\label{prop:imm_n_and_Schur}\slshape
   For~$A \in \operatorname{Mat}_{N,N}(\mathbb{C})$, we have
   \[
      \operatorname{imm}^{\lambda}_n A = s_{\lambda}(x_1,\ldots,x_N).
   \]
   Here $s_{\lambda}$ is the Schur polynomial associated to~$\lambda$,
   and $x_1,\ldots,x_N$ are the eigenvalues of~$A$.
\end{proposition}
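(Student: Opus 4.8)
The plan is to evaluate $\operatorname{imm}^{\lambda}_n A$ explicitly as a combination of power sums of the eigenvalues and then recognize the answer through the Frobenius character formula $s_\lambda=\sum_{\mu\vdash n}z_\mu^{-1}\chi^{\lambda}(\mu)\,p_\mu$, where $z_\mu$ is the order of the centralizer of a permutation of cycle type $\mu$ and $\chi^{\lambda}(\mu)$ denotes the value of $\chi^{\lambda}$ on that class.

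First I would unwind the definition. Since the entries $a_{ij}$ of $A\in\operatorname{Mat}_{N,N}(\mathbb{C})$ are scalars, $\operatorname{row-imm}^{\lambda}A_{II}=\sum_{\tau\in\mathfrak{S}_n}\chi^{\lambda}(\tau)\,a_{i_1 i_{\tau(1)}}\cdots a_{i_n i_{\tau(n)}}$, so
\[
   \operatorname{imm}^{\lambda}_n A
   =\frac{1}{n!}\sum_{\tau\in\mathfrak{S}_n}\chi^{\lambda}(\tau)\sum_{I\in[N]^n}a_{i_1 i_{\tau(1)}}\cdots a_{i_n i_{\tau(n)}}.
\]
Next I would invoke the classical index-contraction identity
\[
   \sum_{I\in[N]^n}a_{i_1 i_{\tau(1)}}\cdots a_{i_n i_{\tau(n)}}=\prod_{c}\operatorname{tr}\!\bigl(A^{\ell(c)}\bigr),
\]
the product ranging over the cycles $c$ of $\tau$ with $\ell(c)$ the length of $c$: the variables $i_s$ attached to one cycle $(s_1\,s_2\,\cdots\,s_\ell)$ enter only through the chain $a_{i_{s_1}i_{s_2}}a_{i_{s_2}i_{s_3}}\cdots a_{i_{s_\ell}i_{s_1}}$, whose sum over those variables is $\operatorname{tr}(A^\ell)$, and variables attached to distinct cycles are disjoint. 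Since $\operatorname{tr}(A^\ell)=p_\ell(x_1,\ldots,x_N)$ for the eigenvalues $x_1,\ldots,x_N$ of $A$, this product equals $p_{\mu(\tau)}(x_1,\ldots,x_N)$, where $\mu(\tau)$ is the cycle type of $\tau$.

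Finally I would group the permutations by cycle type: there are $n!/z_\mu$ of type $\mu$, so
\[
   \operatorname{imm}^{\lambda}_n A=\sum_{\mu\vdash n}\frac{\chi^{\lambda}(\mu)}{z_\mu}\,p_\mu(x_1,\ldots,x_N)=s_\lambda(x_1,\ldots,x_N)
\]
by the Frobenius formula, which is the claim. I do not anticipate a real obstacle here; the only delicate point is the bookkeeping in the contraction identity, namely correctly matching the cyclic structure of $\tau$ to the order in which the matrix factors are multiplied and summed. If one prefers to avoid the trace identity, an equivalent route is to use the conjugation invariance of Proposition~\ref{prop:invariance_of_imm_n} together with the Zariski density of diagonalizable matrices to reduce to $A=\operatorname{diag}(x_1,\ldots,x_N)$; there $\operatorname{row-imm}^{\lambda}A_{II}=x_{i_1}\cdots x_{i_n}\sum_{\tau\in\operatorname{Stab}(I)}\chi^{\lambda}(\tau)$, and summing over $I$, evaluating $\sum_{\tau\in\mathfrak{S}_\mu}\chi^{\lambda}(\tau)=(\mu_1!\cdots)\,K_{\lambda\mu}$ by Frobenius reciprocity, and collecting monomials gives $\sum_\mu K_{\lambda\mu}m_\mu=s_\lambda$. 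Either way, the heart of the matter is a standard expansion of the Schur function.
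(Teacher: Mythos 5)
Your proof is correct, and it is essentially the approach the paper relies on: the paper itself defers this proposition to Section~6 of~\cite{I1}, but the contraction identity $\sum_{I}a_{i_1 i_{\tau(1)}}\cdots a_{i_n i_{\tau(n)}}=\operatorname{tr}(A^{\mu_1})\cdots\operatorname{tr}(A^{\mu_r})$ followed by grouping permutations by cycle type is exactly the computation the paper performs in its proof of Theorem~\ref{thm(introduction):imm*_n_and_tr}, with the Frobenius expansion $s_\lambda=\sum_{\mu}z_\mu^{-1}\chi^{\lambda}(\mu)p_\mu$ finishing the commutative case. Your alternative route via Proposition~\ref{prop:invariance_of_imm_n}, density of diagonalizable matrices, and Kostka numbers is also sound, but the first argument already suffices.
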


See Section~6 of~\cite{I1} for the proofs of Propositions~\ref{prop:invariance_of_imm_n} and 
\ref{prop:imm_n_and_Schur}.

Finally we see the following proposition from~(\ref{eq:imm_for_matrices_with_anticommuting_entries}):

\begin{proposition}\label{prop:imm_n_for_matrices_with_anticommuting_entries}\slshape
   We have $\operatorname{imm}^{\lambda}_n A = 0$ for any $n > 1$,
   when $A$ is a square matrix whose entries anticommute with each other. 
\end{proposition}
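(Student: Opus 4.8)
The plan is to combine the sign rule (\ref{eq:imm_for_matrices_with_anticommuting_entries}) for $\operatorname{row-imm}^{\lambda}$ under simultaneous permutation of rows and columns with the harmless relabeling of the summation index $I$ in the definition of $\operatorname{imm}^{\lambda}_n$.

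First I would record the elementary matrix identity that makes the two operations interact. For $I = (i_1,\ldots,i_n) \in [N]^n$ and $\sigma \in \mathfrak{S}_n$, set $\sigma(I) = (i_{\sigma(1)},\ldots,i_{\sigma(n)}) \in [N]^n$. Then the $n \times n$ matrices $A_{\sigma(I)\sigma(I)}$ and $(A_{II})^{\sigma}$ coincide, since both have $(s,t)$-entry $a_{i_{\sigma(s)} i_{\sigma(t)}}$. The entries of $A_{II}$ form a subfamily of the entries of $A$, hence anticommute with each other, so (\ref{eq:imm_for_matrices_with_anticommuting_entries}) applies to $A_{II}$ and gives
\[
   \operatorname{row-imm}^{\lambda} A_{\sigma(I)\sigma(I)}
   = \operatorname{row-imm}^{\lambda} (A_{II})^{\sigma}
   = \operatorname{sgn}(\sigma)\,\operatorname{row-imm}^{\lambda} A_{II}.
\]

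Next, since $I \mapsto \sigma(I)$ is a bijection of $[N]^n$, summing the displayed identity over all $I \in [N]^n$ yields
\[
   \sum_{I \in [N]^n} \operatorname{row-imm}^{\lambda} A_{II}
   = \operatorname{sgn}(\sigma) \sum_{I \in [N]^n} \operatorname{row-imm}^{\lambda} A_{II},
\]
that is, $\operatorname{imm}^{\lambda}_n A = \operatorname{sgn}(\sigma)\,\operatorname{imm}^{\lambda}_n A$ for every $\sigma \in \mathfrak{S}_n$. Because $n > 1$, we may take $\sigma$ to be a transposition, so $\operatorname{sgn}(\sigma) = -1$ and $\operatorname{imm}^{\lambda}_n A = -\operatorname{imm}^{\lambda}_n A$; as $\mathcal{A}$ is a $\mathbb{C}$-algebra, this forces $\operatorname{imm}^{\lambda}_n A = 0$.

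I do not expect a real obstacle here; the proof is a two-line reindexing argument. The only points deserving a moment's care are verifying the matrix identity $A_{\sigma(I)\sigma(I)} = (A_{II})^{\sigma}$, confirming that the entries of the submatrix $A_{II}$ still anticommute even when $I$ has repeated indices (which is automatic, since a repeated entry $x$ then satisfies $xx = -xx$, consistent with the anticommutation convention), and noting that $I \mapsto \sigma(I)$ genuinely permutes the full index set $[N]^n$ so that the two sums agree term by term after relabeling.
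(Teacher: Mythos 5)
Your proof is correct and follows exactly the route the paper intends: it deduces the vanishing from the sign rule (\ref{eq:imm_for_matrices_with_anticommuting_entries}) applied to each $A_{II}$ together with the reindexing $I \mapsto \sigma(I)$, taking $\sigma$ a transposition. The paper merely cites (\ref{eq:imm_for_matrices_with_anticommuting_entries}) without spelling out these details, and your write-up supplies them faithfully, including the observation that repeated indices in $I$ cause no trouble.
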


%
\section{Twisted analogues of the irreducible characters}
%
%
To define the \textit{twisted immanant},
we introduce a function $\chi^{*\lambda}$ on~$\mathfrak{S}_n$
satisfying the relation 
$\chi^{*\lambda}(\tau\sigma\tau^{-1}) = \operatorname{sgn}(\tau) \chi^{*\lambda}(\sigma)$.
This $\chi^{*\lambda}$ was first given by Frobenius \cite{F} 
through the representation theory of the alternating group.

See Sections~1.2 and 2.5 of~\cite{JK} for the details in this section.

\subsection{Notation for partitions}
Before the main subject, we fix some notation for partitions.

For a partition $\lambda$,
we denote the $i$th part of~$\lambda$ by~$\lambda_i$.
Thus we have $\lambda = (\lambda_1,\ldots,\lambda_l)$ for a partition $\lambda$ of length $l$.

Let $P(n)$ be the set of all partitions of~$n$.
Moreover we put
\begin{align*}
   P_{\mathrm{self\text{-}conj}}(n) &= \big\{ \lambda \in P(n) \,\big|\, \lambda = \lambda' \big\}, \\
   P_{\mathrm{strict, odd}}(n) &= \big\{ (\mu_1,\ldots,\mu_r) \in P(n)
   \,\big|\, r\geq 0, \,\,\,
   \mu_1 > \cdots > \mu_r > 0, \,\, \,
   \text{$\mu_1,\ldots,\mu_r$: odd} \big\},
\end{align*}
where $\lambda'$ means the conjugate of the partition $\lambda$.
Namely $P_{\mathrm{self\text{-}conj}}(n)$ is the set of all self-conjugate partitions of~$n$,
and $P_{\mathrm{strict,odd}}(n)$ is
the set of all strict partitions of~$n$ whose nonzero parts are all odd.
Moreover, we put
\[
   P = \bigsqcup_{n \geq 0} P(n), \qquad
   P_{\mathrm{self\text{-}conj}} = \bigsqcup_{n \geq 0} P_{\mathrm{self\text{-}conj}}(n), \qquad
   P_{\mathrm{strict,odd}} = \bigsqcup_{n \geq 0} P_{\mathrm{strict,odd}}(n).
\]

We have a natural bijection $h$ from~$P_{\mathrm{self\text{-}conj}}(n)$ to~$P_{\mathrm{strict, odd}}(n)$.
Namely, for $\lambda \in P_{\mathrm{self\text{-}conj}}(n)$,
we denote by~$h(\lambda)$ the partition whose $k$th part is equal to the length of the $k$th 
diagonal hook of~$\lambda$.
For example, we have $h \colon (4,4,3,2) \mapsto (7,5,1)$
as is clear from the following correspondence between diagrams:
\smallskip
\[
   \begin{tikzpicture}[x=5mm, y=5mm, baseline = -6mm]
      \fill (0,0) circle (1mm)
            (1,0) circle (1mm)
            (2,0) circle (1mm)
            (3,0) circle (1mm)
            (0,-1) circle (1mm)
            (1,-1) circle (1mm)
            (2,-1) circle (1mm)
            (3,-1) circle (1mm)         
            (0,-2) circle (1mm)
            (1,-2) circle (1mm)
            (2,-2) circle (1mm)
            (0,-3) circle (1mm)
            (1,-3) circle (1mm);
      \draw[-] (3,0) -- (0,0) -- (0,-3);
      \draw[-] (3,-1) -- (1,-1) -- (1,-3);
   \end{tikzpicture}
   \quad \mapsto \quad
   \begin{tikzpicture}[x=5mm, y=5mm, baseline = -6mm]
      \fill (0,0) circle (1mm)
            (1,0) circle (1mm)
            (2,0) circle (1mm)
            (3,0) circle (1mm)
            (4,0) circle (1mm)
            (5,0) circle (1mm)
            (6,0) circle (1mm)
            (0,-1) circle (1mm)
            (1,-1) circle (1mm)
            (2,-1) circle (1mm)
            (3,-1) circle (1mm)         
            (4,-1) circle (1mm)
            (0,-2) circle (1mm);
      \draw[-] (0,0) -- (6,0);
      \draw[-] (0,-1) -- (4,-1);
   \end{tikzpicture}
\smallskip
\]
In other words,
we define $h$ by
\[
   h \colon 
   \lambda \mapsto (2 \lambda_1 - 1, 2 \lambda_2 - 3,\ldots, 2 \lambda_r - (2r-1)).
\]
Here $r$ means the rank of~$\lambda$ 
(namely the length of the main diagonal of~$\lambda$).
It is easily seen that this $h$ is a bijection
from~$P_{\mathrm{self\text{-}conj}}(n)$ to~$P_{\mathrm{strict, odd}}(n)$. 

Finally we put $m(\lambda) = \frac{1}{2}(n-r)$ for a self-conjugate partition 
$\lambda$ of~$n$ of rank $r$.
Note that this quantity is always an integer.

\subsection{Conjugacy classes of the alternating group}
We recall the conjugacy classes of the alternating group $\mathfrak{A}_n$.

Let $C_{\mu}$ be the conjugacy class of the symmetric group $\mathfrak{S}_n$ determined by~$\mu \in P(n)$:
\[
   C_{\mu} = \big\{ \sigma \in \mathfrak{S}_n \,\big|\, 
   \text{The cycle type of~$\sigma$ is $\mu$} \big\}.
\]
It is well known that $|C_{\mu}| = n!/z_{\mu}$,
where $z_{\mu} = \prod_j j^{m_j} m_j!$ for~$\mu = (1^{m_1}, 2^{m_2}, \ldots)$.

When $\mu \not\in P_{\mathrm{strict,odd}}(n)$ and $C_{\mu} \subset \mathfrak{A}_n$,
this $C_{\mu}$ is also a conjugacy class of~$\mathfrak{A}_n$.

However, 
when $\mu = (\mu_1,\ldots,\mu_r) \in P_{\mathrm{strict,odd}}(n)$,
then $C_{\mu}$ is a disjoint union of two conjugacy classes of~$\mathfrak{A}_n$.
Let us give more details.
We fix $\sigma \in C_{\mu}$,
and consider the cycle decomposition
\[
   \sigma
   = 
   (i_{11} \,\, i_{12} \,\, \ldots \,\, i_{1\mu_1})
   (i_{21} \,\, i_{22} \,\,\ldots \,\, i_{2\mu_2})
   \cdots
   (i_{r1} \,\, i_{r2} \,\,\ldots \,\, i_{r\mu_r}).
\]
We put $f(\sigma) = 1$ (resp. $f(\sigma) = -1$)
if and only if the inversion number of the following sequence is even (resp. odd):
\[
   (i_{11} \,\, i_{12} \,\, \ldots \,\, i_{1\mu_1} \,\,
   i_{21} \,\, i_{22} \,\,\ldots \,\, i_{2\mu_2} \,\,
   \ldots \,\,
   i_{r1} \,\, i_{r2} \,\,\ldots \,\, i_{r\mu_r}).
\]
We see the well-definedness of~$f(\sigma)$ 
from the fact $\mu \in P_{\mathrm{strict,odd}}(n)$.
Using this, we put
\[
   C^{\pm}_{\mu} = \big\{ \sigma \in C_{\mu} \,\big|\, f(\sigma) = \pm 1 \big\}.
\]
For these $C^{+}_{\mu}$ and $C^{-}_{\mu}$, we have the following:

\begin{proposition}\slshape
   For~$\mu \in P_{\mathrm{strict,odd}}(n)$,
   $C^{+}_{\mu}$ and $C^{-}_{\mu}$ are conjugacy classes of~$\mathfrak{A}_n$ 
   and we have $C_{\mu} = C^{+}_{\mu} \sqcup C^{-}_{\mu}$.
\end{proposition}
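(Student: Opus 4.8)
The plan is to reduce the statement to two classical ingredients — the general criterion for when an $\mathfrak{S}_n$-conjugacy class contained in $\mathfrak{A}_n$ splits, and a computation of centralizers — and then to use the sign function $f$ to name the two resulting pieces. Observe first that $C_\mu \subseteq \mathfrak{A}_n$ is automatic: a cycle of odd length is an even permutation, so every element of $C_\mu$ is even when $\mu \in P_{\mathrm{strict,odd}}(n)$.

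Next I would set up the splitting mechanism. For $\sigma \in \mathfrak{A}_n$ with $\mathfrak{S}_n$-conjugacy class $C$, write $Z = Z_{\mathfrak{S}_n}(\sigma)$ for its centralizer in $\mathfrak{S}_n$. The set $C$ is a union of $\mathfrak{A}_n$-orbits, and by orbit--stabilizer the $\mathfrak{A}_n$-orbit of $\sigma$ has index $[\mathfrak{A}_n : Z \cap \mathfrak{A}_n]$ in $\mathfrak{A}_n$; comparing this with $|C| = [\mathfrak{S}_n : Z]$ one finds that $C$ remains a single $\mathfrak{A}_n$-class when $Z \not\subseteq \mathfrak{A}_n$, and breaks into exactly two $\mathfrak{A}_n$-classes, each of size $|C|/2$ and interchanged by conjugation by any transposition, when $Z \subseteq \mathfrak{A}_n$. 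I would then check that the second case occurs for $\mu = (\mu_1,\dots,\mu_r) \in P_{\mathrm{strict,odd}}(n)$: writing $c_1,\dots,c_r$ for the cycles of $\sigma \in C_\mu$, the fact that the lengths $\mu_1 > \cdots > \mu_r$ are pairwise distinct leaves no cycles of equal length to permute, so $Z = \langle c_1\rangle \times \cdots \times \langle c_r\rangle$; and each $c_k$ is a cycle of odd length, hence even, so $Z \subseteq \mathfrak{A}_n$. Therefore $C_\mu$ is a disjoint union of exactly two $\mathfrak{A}_n$-conjugacy classes.

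Then I would analyse $f$. An ordered cycle decomposition of $\sigma \in C_\mu$, read off in the prescribed way, lists $1,\dots,n$ each exactly once and so defines a permutation $w_\sigma \in \mathfrak{S}_n$ with $\operatorname{sgn}(w_\sigma) = (-1)^{\operatorname{inv}(w_\sigma)}$; thus $f(\sigma) = \operatorname{sgn}(w_\sigma)$. Since the parts of $\mu$ are distinct, the only ambiguity in $w_\sigma$ is a cyclic rotation within each cycle, and rotating a $\mu_k$-cycle changes $\operatorname{inv}$ by $\mu_k - 1 \equiv 0 \pmod 2$; this is the promised well-definedness of $f$. Conjugation by $\tau \in \mathfrak{S}_n$ replaces a valid ordered decomposition of $\sigma$ by its entrywise image under $\tau$, i.e. replaces $w_\sigma$ by $\tau \circ w_\sigma$, whence $f(\tau\sigma\tau^{-1}) = \operatorname{sgn}(\tau)\, f(\sigma)$. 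In particular $f$ is constant on each $\mathfrak{A}_n$-conjugacy class, so $C^{+}_{\mu}$ and $C^{-}_{\mu}$ are unions of such classes; and conjugation by a transposition carries $C^{+}_{\mu}$ bijectively onto $C^{-}_{\mu}$, so both are nonempty.

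Finally I would assemble the pieces: $C_\mu = C^{+}_{\mu} \sqcup C^{-}_{\mu}$ is immediate from the definition of $f$, and since $C_\mu$ is the union of exactly two $\mathfrak{A}_n$-classes while each of the nonempty sets $C^{\pm}_{\mu}$ is a union of $\mathfrak{A}_n$-classes, each $C^{\pm}_{\mu}$ must be a single $\mathfrak{A}_n$-class. The main obstacle is the sign bookkeeping — carefully justifying the well-definedness of $f$ and the transformation rule $f(\tau\sigma\tau^{-1}) = \operatorname{sgn}(\tau) f(\sigma)$ — together with making the general splitting lemma precise enough to guarantee that there are exactly two pieces; the rest is formal.
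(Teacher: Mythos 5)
The paper does not prove this proposition at all: it is quoted as a classical fact going back to Frobenius, with a pointer to Sections~1.2 and 2.5 of James--Kerber, so there is no internal proof to compare against and your argument stands on its own. It is correct, and it is essentially the standard argument one finds in those references. The two halves fit together properly: (1) the splitting criterion via orbit--stabilizer, with the centralizer computation $Z_{\mathfrak{S}_n}(\sigma)=\langle c_1\rangle\times\cdots\times\langle c_r\rangle$ (valid precisely because the parts of $\mu$ are pairwise distinct, so there are no cycles of equal length to permute), each $c_k$ even because $\mu_k$ is odd, hence $Z\subseteq\mathfrak{A}_n$ and $C_\mu$ is a union of exactly two $\mathfrak{A}_n$-classes; (2) the analysis of $f$, where the distinctness of the parts also pins down the order of the cycle blocks in the word $w_\sigma$, so cyclic rotation really is the only ambiguity, and a one-step rotation of a block of odd length $\mu_k$ changes the inversion parity by $\mu_k-1\equiv 0 \pmod 2$, while conjugation by $\tau$ acts entrywise on the word, replacing $w_\sigma$ by $\tau\circ w_\sigma$, giving $f(\tau\sigma\tau^{-1})=\operatorname{sgn}(\tau)f(\sigma)$. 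From this, $C^{\pm}_\mu$ are nonempty unions of $\mathfrak{A}_n$-classes partitioning $C_\mu$, hence each is a single class.

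Two small remarks. Your phrase ``rotating a $\mu_k$-cycle changes $\operatorname{inv}$ by $\mu_k-1$'' should be read (as you clearly intend) as a statement about parity for a one-step rotation, an arbitrary rotation being a composition of such steps; in a written-up version make that explicit. Also, the argument tacitly assumes $n\geq 2$ (so that $[\mathfrak{S}_n:\mathfrak{A}_n]=2$ and a transposition exists); in the degenerate case $n\leq 1$, e.g.\ $\mu=(1)$, the class $\{\mathrm{id}\}$ does not split and $C^{-}_{\mu}=\emptyset$, but this defect is inherited from the statement itself and is harmless for the uses made of it in the paper.
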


\subsection{Irreducible representations and characters of the alternating group}
We recall the irreducible representations 
and characters of~$\mathfrak{A}_n$.
Let $\pi^{\lambda}$ be the irreducible representation 
of~$\mathfrak{S}_n$ determined by~$\lambda \in P(n)$,
and consider its restriction to~$\mathfrak{A}_n$.

First, we consider the case $\lambda \not\in P_{\mathrm{self\text{-}conj}}(n)$.
In this case, the restriction $\pi^{\lambda}|_{\mathfrak{A}_n}$ is also irreducible.
We denote by~$\psi^{\lambda}$ the character of~$\pi^{\lambda}|_{\mathfrak{A}_n}$
(namely, $\psi^{\lambda} = \chi^{\lambda}|_{\mathfrak{A}_n}$).
Then the relation $\psi^{\lambda} = \psi^{\lambda'}$ holds,
because $\chi^{\lambda'}(\sigma) = \operatorname{sgn}(\sigma)\chi^{\lambda}(\sigma)$.

Next, we consider the case $\lambda \in P_{\mathrm{self\text{-}conj}}(n)$.
In this case, the restriction $\pi^{\lambda}|_{\mathfrak{A}_n}$ is 
decomposed to two irreducible representations of~$\mathfrak{A}_n$.
We denote by~$\psi^{\lambda \pm}$ 
the characters of these two irreducible representations.
We can describe these $\psi^{\lambda\pm}$ as 
\[
   \psi^{\lambda\pm}(\sigma) = 
   \begin{cases}
      \frac{1}{2} ( (-1)^{m(\lambda)} \pm 
      i^{m(\lambda)} \sqrt{\mu_1 \cdots \mu_r} ), 
      & \sigma \in C^+_{\mu}, \\
      \frac{1}{2} ( (-1)^{m(\lambda)} \mp 
      i^{m(\lambda)} \sqrt{\mu_1 \cdots \mu_r} ),
      & \sigma \in C^-_{\mu}, \\
      \frac{1}{2} \chi^{\lambda}(\sigma), & \sigma \not\in C_{\mu}.
   \end{cases}
\]
Here we put $\mu = h(\lambda)$, and $i$ means the imaginary unit.
We also note the following relation on~$\chi^{\lambda}$ and 
$\sigma \in \mathfrak{S}_n$ whose cycle type is in~$P_{\mathrm{strict, odd}}(n)$:
\[
   \chi^{\lambda}(\sigma) = 
   \begin{cases}
      (-1)^{m(\lambda)}, & \sigma \in C_{\mu}, \\
      0, & \text{otherwise}.
   \end{cases}
\]
Of course, we have $\chi^{\lambda}(\sigma) = \psi^{\lambda +}(\sigma) + \psi^{\lambda -}(\sigma)$.

\subsection{Definition of {\boldmath$\chi^{* \lambda}$}}\label{subsec:chi*}
Let us introduce the function $\chi^{*\lambda}\colon \mathfrak{S}_n \to \mathbb{C}$.
For~$\lambda \in P_{\mathrm{self\text{-}conj}}(n)$, 
we put
\[
   \chi^{*\lambda}(\sigma) = 
   \begin{cases}
      \psi^{\lambda+}(\sigma) - \psi^{\lambda-}(\sigma), & \sigma \in \mathfrak{A}_n, \\
      0, & \sigma \not\in \mathfrak{A}_n,
   \end{cases}
\]
so that
\begin{equation}
\label{eq:description_of_chi*}
   \chi^{*\lambda}(\sigma) = 
   \begin{cases}
      \pm i^{m(\lambda)}\sqrt{\mu_1 \cdots \mu_r}, & \sigma \in C^{\pm}_{\mu}, \\
      0, & \sigma \not\in C^{\pm}_{\mu},
   \end{cases}   
\end{equation}
where $\mu = (\mu_1,\ldots,\mu_r) = h(\lambda)$.

For this function $\chi^{*\lambda}$, we have
\begin{equation}
\label{eq:twistedness_of_chi*}
   \chi^{*\lambda}(\sigma^{-1}) 
   = \overline{\chi^{*\lambda}(\sigma)} 
   = \chi^{\lambda}(\sigma) \chi^{*\lambda}(\sigma), \qquad
   \chi^{*\lambda}(\tau\sigma\tau^{-1}) 
   = \operatorname{sgn}(\tau) \chi^{*\lambda}(\sigma)
\end{equation}
and 
\begin{alignat}{2}
\label{eq:orthogonal_relations_for_chi*}
   \frac{1}{|\mathfrak{S}_n|} \sum_{\tau \in \mathfrak{S}_n} 
   \chi^{\lambda}(\sigma\tau^{-1})\chi^{\mu}(\tau)
   &= \delta_{\lambda\mu} \frac{\chi^{\lambda}(\sigma)}{\chi^{\lambda}(1)}, \quad
   &\frac{1}{|\mathfrak{S}_n|} \sum_{\tau \in \mathfrak{S}_n} 
   \chi^{*\lambda}(\sigma\tau^{-1})\chi^{*\mu}(\tau)
   &= \delta_{\lambda\mu} \frac{\chi^{\lambda}(\sigma)}{\chi^{\lambda}(1)}, 
   \allowdisplaybreaks\\
\notag
   \frac{1}{|\mathfrak{S}_n|} \sum_{\tau \in \mathfrak{S}_n} 
   \chi^{*\lambda}(\sigma\tau^{-1})\chi^{\mu}(\tau)
   &= \delta_{\lambda\mu} \frac{\chi^{*\lambda}(\sigma)}{\chi^{\lambda}(1)}, \quad
   &\frac{1}{|\mathfrak{S}_n|} \sum_{\tau \in \mathfrak{S}_n} 
   \chi^{\lambda}(\sigma\tau^{-1})\chi^{*\mu}(\tau)
   &= \delta_{\lambda\mu} \frac{\chi^{*\lambda}(\sigma)}{\chi^{\lambda}(1)}.
\end{alignat}
Here the superscripts of~$\chi^*$ are self-conjugate.
Moreover, $\{ \chi^{*\lambda} \,|\, \lambda \in P_{\mathrm{self\text{-}conj}}(n) \}$ 
forms an orthogonal basis of the following vector space:
\[
   \big\{ \chi \colon \mathfrak{S}_n \to \mathbb{C} \,\big|\, 
   \text{$\chi(\tau\sigma\tau^{-1}) = \operatorname{sgn}(\tau) \chi(\sigma)$ 
   for any $\sigma$, $\tau \in \mathfrak{S}_n$} \big\}.
\]
These properties follow from the fact that $\psi^{\lambda\pm}$ is 
an irreducible character of~$\mathfrak{A}_n$.

%
\section{Twisted immanant}
%
%
Let us introduce the twisted immanant and see its basic properties.

\subsection{Definition of the twisted immanant}
We define the twisted immanant using the function $\chi^{*\lambda}$.
For~$\lambda \in P_{\mathrm{self\text{-}conj}}(n)$ and 
$A = (a_{ij})_{1 \leq i,j \leq n} \in \operatorname{Mat}_{n,n}(\mathcal{A})$, 
we put 
\[
   \operatorname{imm}^{*\lambda} A
   = \sum_{\tau \in \mathfrak{S}_n} \chi^{*\lambda}(\tau)
   a_{1 \tau(1)} \cdots a_{n \tau(n)}.
\]
When the entries of~$A$ commute with each other,
we can express this in the following various way:
\begin{align*}
   \operatorname{imm}^{*\lambda} A
   &= \sum_{\tau \in \mathfrak{S}_n} \chi^{*\lambda}(\tau)
   a_{1 \tau(1)} \cdots a_{n \tau(n)}
   \allowdisplaybreaks\\
   &= \sum_{\sigma \in \mathfrak{S}_n} \chi^{*\lambda}(\sigma^{-1})
   a_{\sigma(1) 1} \cdots a_{\sigma(n) n} 
   \allowdisplaybreaks\\
   &= \frac{1}{n!} \sum_{\sigma,\tau \in \mathfrak{S}_n} \chi^{*\lambda}(\tau\sigma^{-1})
   a_{\sigma(1)\tau(1)} \cdots a_{\sigma(p) \tau(p)} 
   \allowdisplaybreaks\\
   &= \frac{\chi^{\lambda}(1)}{n!} 
   \sum_{\sigma,\tau \in \mathfrak{S}_n} \chi^{*\lambda}(\tau) \chi^{\lambda}(\sigma^{-1})
   a_{\sigma(1)\tau(1)} \cdots a_{\sigma(p) \tau(p)} 
   \allowdisplaybreaks\\
   &= \frac{\chi^{\lambda}(1)}{n!} 
   \sum_{\sigma,\tau \in \mathfrak{S}_n} \chi^{\lambda}(\tau) \chi^{*\lambda}(\sigma^{-1})
   a_{\sigma(1)\tau(1)} \cdots a_{\sigma(p) \tau(p)}.
\end{align*}
These equalities also hold
when the entries of~$A$ anticommute with each other.
This is seen by noting that $\chi^{*\lambda}(\sigma) = 0$ unless $\sigma \in \mathfrak{A}_n$.
However, unless the entries commute or anticommute with each other,
these equalities do not hold in general.
To distinguish the first and second expressions, we often denote them as follows:
\begin{align*}
   \operatorname{row-imm}^{*\lambda} A 
   &= \sum_{\tau \in \mathfrak{S}_n} \chi^{*\lambda}(\tau)
   a_{1 \tau(1)} \cdots a_{n \tau(n)}, \\
   \operatorname{column-imm}^{*\lambda} A 
   &= \sum_{\sigma \in \mathfrak{S}_n} \chi^{*\lambda}(\sigma^{-1})
   a_{\sigma(1) 1} \cdots a_{\sigma(n) n}.
\end{align*}

\subsection{Cauchy--Binet type formulas}
The first remarkable property of the twisted immanant is 
Theorem~\ref{thm(introduction):Cauchy-Binet_for_imm*}.
This can be regarded
as Cauchy--Binet type formulas for the ordinary immanant and the twisted immanant.
We can prove this using (\ref{eq:orthogonal_relations_for_chi*}) 
in a way similar to the proof of Proposition~\ref{prop:Cauchy-Binet_for_imm}.

\subsection{Relation with the conjugate transpose}
The twisted immanant satisfies the following relation for~$A \in \operatorname{Mat}_{n,n}(\mathbb{C})$:
\[
   \operatorname{imm}^{*\lambda} A^* = \overline{\operatorname{imm}^{*\lambda} A}.
\]
Here we denote by~$A^*$ the conjugate transpose of~$A$
(namely we put $A^* = (\bar{a}_{ji})_{1 \leq i,j \leq n}$ for~$A = (a_{ij})_{1 \leq i,j \leq n}$).
This relation is immediate from the first relation of~(\ref{eq:twistedness_of_chi*}).

\subsection{Invariance under permutations}
The twisted immanant has some invariance properties under the permutations of rows and columns
as the ordinary immanant does. 
Consider an $n \times n$ matrix $A$ and $\sigma \in \mathfrak{S}_n$.
On the one hand,
when the entries of~$A$ commute with each other,
we have 
\begin{equation}
\label{eq:twistedness_of_imm*}
   \operatorname{imm}^{*\lambda} A^{\sigma}
   = \operatorname{sgn}(\sigma) \operatorname{imm}^{*\lambda} A.
\end{equation}
On the other hand, when the entries of~$A$ anticommute with each other, we have 
\[
   \operatorname{imm}^{*\lambda} A^{\sigma} 
   = \operatorname{imm}^{*\lambda} A.
\]
These relations are immediate from the second relation of~(\ref{eq:twistedness_of_chi*}).

\subsection{A sum of twisted immanants}
Let us consider the counterpart of the function ``$\operatorname{imm}^{\lambda}_n$.''
For~$\lambda \in P_{\mathrm{self\text{-}conj}}(n)$ and $A \in \operatorname{Mat}_{N,N}(\mathcal{A})$, 
we put 
\[
   \operatorname{imm}^{*\lambda}_n A 
   = \frac{1}{n!} \sum_{I \in [N]^n} \operatorname{row-imm}^{*\lambda} A_{II} 
   = \frac{1}{n!} \sum_{I \in [N]^n} \operatorname{column-imm}^{*\lambda} A_{II}.
\]
Here the second equality is seen by a direct calculation.
This is invariant under the conjugation by~$GL_N(\mathbb{C})$:

\begin{theorem}\slshape
   For any $g \in GL_N(\mathbb{C})$, we have
   $\operatorname{imm}^{*\lambda}_n gAg^{-1}
   = \operatorname{imm}^{*\lambda}_n A$.
\end{theorem}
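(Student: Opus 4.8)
The plan is to mimic the argument for Proposition~\ref{prop:invariance_of_imm_n}, but since we are not assuming the entries of $A$ commute, we cannot simply invoke a Cauchy--Binet formula; instead we work directly with the definition and exploit the algebraic identities for $\chi^{*\lambda}$ collected in~(\ref{eq:orthogonal_relations_for_chi*}). First I would write out $\operatorname{imm}^{*\lambda}_n gAg^{-1}$ as a multiple sum over index tuples, using the expansion $(gAg^{-1})_{pq} = \sum_{s,t} g_{ps}\,a_{st}\,(g^{-1})_{tq}$. The key observation, exactly as in the commutative case treated in Section~6 of~\cite{I1}, is that it is enough to verify the invariance separately for $g$ a diagonal matrix and for $g$ a permutation matrix, together with invariance under $g \mapsto g + (\text{suitable infinitesimal})$ — or, more cleanly, to check it on a set of generators of $GL_N(\mathbb{C})$ and use that $\operatorname{imm}^{*\lambda}_n$ is a polynomial map in the entries of $g$ and $g^{-1}$. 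For the diagonal and permutation generators the invariance is immediate from the definitions (the permutation case uses the invariance under permutations of rows and columns noted just above, specifically the twisted version of~(\ref{eq:imm_for_matrices_with_anticommuting_entries}) adapted to $\chi^{*\lambda}$).

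Alternatively, and I think more in the spirit of this paper, one can give a direct computation valid for arbitrary $g$ by reorganizing the sum. Write $\operatorname{imm}^{*\lambda}_n gAg^{-1} = \frac{1}{n!}\sum_{I \in [N]^n} \sum_{\tau \in \mathfrak{S}_n} \chi^{*\lambda}(\tau)\,(gAg^{-1})_{i_1 i_{\tau(1)}} \cdots (gAg^{-1})_{i_n i_{\tau(n)}}$, expand each factor, and collect the $g$-entries. Because the $g_{ps}$ and $(g^{-1})_{tq}$ are scalars they can be pulled out freely past the $a_{st}$ (which need not commute among themselves), and the sum over the tuple $I$ produces, for each occurrence, a factor $\sum_{i} (g^{-1})_{?\,i} g_{i\,?} = \delta$, i.e.\ the $gg^{-1} = 1$ relations telescope around the cycle structure of $\tau$. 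After this collapse one is left exactly with $\frac{1}{n!}\sum_{J}\sum_{\tau}\chi^{*\lambda}(\tau)\,a_{j_1 j_{\tau(1)}}\cdots a_{j_n j_{\tau(n)}} = \operatorname{imm}^{*\lambda}_n A$. The bookkeeping here is precisely the same as in the proof of Proposition~\ref{prop:invariance_of_imm_n} in~\cite{I1}; the only point requiring a line of care is that $\chi^{*\lambda}(\tau)$ is not a class function but satisfies $\chi^{*\lambda}(\sigma\tau\sigma^{-1}) = \operatorname{sgn}(\sigma)\chi^{*\lambda}(\tau)$, and one must check that the sign factors produced by relabeling indices cancel — they do, because in the collapse each transposition-type relabeling of the summation index $J$ is accompanied by the matching conjugation of $\tau$, and the relevant product $a_{j_1 j_{\tau(1)}}\cdots a_{j_n j_{\tau(n)}}$ only involves indices up to simultaneous reindexing, so no spurious sign survives.

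The step I expect to be the main obstacle is precisely this sign/reindexing check: one has to be sure that when the $I$-summation forces identifications among indices (so that effectively one passes from an $n$-tuple with repeats to a genuine function of $J$), the twisting relation $\chi^{*\lambda}(\tau\sigma\tau^{-1}) = \operatorname{sgn}(\tau)\chi^{*\lambda}(\sigma)$ combines correctly with the sign incurred by reordering the product of (generally noncommuting) $a_{st}$'s. In the commutative case of Proposition~\ref{prop:invariance_of_imm_n} this is invisible because $\chi^\lambda$ is a genuine class function and the $a$'s commute; here both complications appear but are designed to offset each other. I would handle it by reducing — via the "check on generators" strategy of the first paragraph — to the permutation-matrix case, where the identity $\operatorname{imm}^{*\lambda} A^\sigma = \operatorname{sgn}(\sigma)\operatorname{imm}^{*\lambda}A$ (commuting case) resp.\ $\operatorname{imm}^{*\lambda}A^\sigma = \operatorname{imm}^{*\lambda}A$ (anticommuting case) has already been recorded, and summing over $I$ kills the $\operatorname{sgn}(\sigma)$ discrepancy; the diagonal case is trivial. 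That reduction avoids the delicate global sign computation entirely, at the cost of one remark that $\operatorname{imm}^{*\lambda}_n gAg^{-1} - \operatorname{imm}^{*\lambda}_n A$ is polynomial in the entries of $g, g^{-1}$ and vanishes on a Zariski-dense subset of $GL_N$.
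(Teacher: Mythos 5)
Your second, ``direct computation'' paragraph is essentially sound and is in fact the route the paper takes. After expanding $(gAg^{-1})_{i_k i_{\tau(k)}}=\sum_{s_k,t_k} g_{i_k s_k}\,a_{s_k t_k}\,(g^{-1})_{t_k i_{\tau(k)}}$, only the \emph{scalar} $g$-entries are displaced; the noncommuting entries stay in their original order $a_{s_1 t_1}\cdots a_{s_n t_n}$, and summing over $I$ produces $\sum_{i_m}(g^{-1})_{t_{\tau^{-1}(m)} i_m}g_{i_m s_m}=\delta_{t_{\tau^{-1}(m)},s_m}$, i.e.\ $t_k=s_{\tau(k)}$, after which the remaining sum is literally $\frac{1}{n!}\sum_{S}\sum_{\tau}\chi^{*\lambda}(\tau)a_{s_1 s_{\tau(1)}}\cdots a_{s_n s_{\tau(n)}}=\operatorname{imm}^{*\lambda}_n A$. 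In particular the ``sign/reindexing check'' you single out as the main obstacle never arises: no conjugation of $\tau$ and no reordering of the $a$'s is performed, so neither the relation $\chi^{*\lambda}(\tau\sigma\tau^{-1})=\operatorname{sgn}(\tau)\chi^{*\lambda}(\sigma)$ nor any (anti)commutativity of the entries is needed, and the invariance holds for arbitrary entries, exactly as stated. The paper packages the same manipulation as the lemma $\operatorname{imm}^{*\lambda}_n(gA)=\operatorname{imm}^{*\lambda}_n(Ag)$ for every $g\in\operatorname{Mat}_{N,N}(\mathbb{C})$, using that $\operatorname{imm}^{*\lambda}_n$ can be computed with either row- or column-twisted immanants; your telescoping is that lemma applied to $g$ and $Ag^{-1}$.

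The route you say you would \emph{actually} follow, however, has a genuine gap. Checking invariance for diagonal matrices and permutation matrices and then invoking ``polynomial in the entries of $g,g^{-1}$, vanishing on a Zariski-dense subset'' does not work: diagonal and permutation matrices generate only the group of monomial matrices, a Zariski-closed proper subgroup of $GL_N(\mathbb{C})$ of dimension $N$, which is neither all of $GL_N(\mathbb{C})$ nor dense in it. To make a generators argument honest you would have to include the elementary matrices $1+tE_{ij}$ (your parenthetical ``suitable infinitesimal''), and for those the invariance is not immediate from the definitions---it is essentially the whole content of the theorem, and it is precisely the point where the commuting-entries proof via Proposition~\ref{prop:Cauchy-Binet_for_imm} (cf.\ Proposition~\ref{prop:invariance_of_imm_n}) is unavailable here, as you yourself note. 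So discard the reduction-to-generators step and simply write out the telescoping computation of your second paragraph (or the paper's one-sided lemma); no case distinction and no density argument is needed.
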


To prove this, it suffices to prove the following lemma:

\begin{lemma}\slshape
   For any $g \in \operatorname{Mat}_{N,N}(\mathbb{C})$, we have
   $\operatorname{imm}^{*\lambda}_n gA
   = \operatorname{imm}^{*\lambda}_n Ag$.
\end{lemma}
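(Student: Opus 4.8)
The plan is to prove the lemma by a direct manipulation of the defining sum, exploiting the multilinearity of the product $a_{i_1 j_1}\cdots$ in its rows/columns and the orthogonality relations~(\ref{eq:orthogonal_relations_for_chi*}). Writing $g = (g_{ij})$, the key observation is that $\operatorname{imm}^{*\lambda}_n gA$ is obtained from $\operatorname{imm}^{*\lambda}_n A$ by summing over how the rows of $A$ are recombined by $g$, and similarly $\operatorname{imm}^{*\lambda}_n Ag$ recombines the columns. So I would first expand $\operatorname{column-imm}^{*\lambda}(gA)_{II}$ by inserting $(gA)_{i_s i_t} = \sum_{k} g_{i_s k} a_{k i_t}$, then sum over $I \in [N]^n$. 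The sum over $I$ decouples: the indices $i_t$ that appear as second (column) indices of $A$ range freely, while the $i_s$ appear only through $g_{i_s k}$ — so $\sum_{i} g_{ik}$ type sums do not collapse nicely. This suggests it is cleaner to use the symmetric bilinear expression (the fourth or fifth display in the definition of $\operatorname{imm}^{*\lambda}$) rather than the row/column form.

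Concretely, I would use
\[
   \operatorname{imm}^{*\lambda} B_{II}
   = \frac{\chi^{\lambda}(1)}{n!}
   \sum_{\sigma,\tau \in \mathfrak{S}_n} \chi^{*\lambda}(\tau)\chi^{\lambda}(\sigma^{-1})
   b_{i_{\sigma(1)} i_{\tau(1)}} \cdots b_{i_{\sigma(n)} i_{\tau(n)}},
\]
valid here since the entries of $A$ — and hence of $gA$ and $Ag$ — commute with $\mathbb{C}$ (the $g$ entries are scalars). Substituting $B = gA$, expanding each $(gA)_{i_{\sigma(s)} i_{\tau(s)}} = \sum_{k_s} g_{i_{\sigma(s)} k_s} a_{k_s i_{\tau(s)}}$, and then summing over $I \in [N]^n$, the factor $\sum_{I} \prod_s g_{i_{\sigma(s)} k_s}$ — after reindexing $i_{\sigma(s)} = j_{\sigma(s)}$ — becomes $\prod_s \big(\sum_{j} g_{j k_{\sigma^{-1}(s)}} \cdot (\text{stuff})\big)$; the point is that the $i$'s that feed into $a_{k_s i_{\tau(s)}}$ as column indices are exactly the same $i$'s, so one does NOT get a clean factorization. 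The right move instead: keep $I$ fixed, apply the permutation invariance $A^{\sigma}$ results of the previous subsection to absorb $\sigma$, reduce to comparing $\frac{1}{n!}\sum_I \operatorname{row-imm}^{*\lambda}(gA)_{II}$ with $\frac{1}{n!}\sum_I \operatorname{row-imm}^{*\lambda}(Ag)_{II}$, and then use that $\operatorname{row-imm}^{*\lambda}$ is multilinear and alternating-up-to-sign in rows.

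The cleanest route, which I would actually write: expand $\operatorname{row-imm}^{*\lambda}(gA)_{II} = \sum_{\tau}\chi^{*\lambda}(\tau)\prod_{s}(gA)_{i_s i_{\tau(s)}}$. Insert $(gA)_{i_s i_{\tau(s)}} = \sum_{J}\prod_s g_{i_s j_s} a_{j_s i_{\tau(s)}}$, but crucially also use the standard trick of summing over $I$ first: $\sum_{I \in [N]^n} \big(\prod_s g_{i_s j_s}\big)\big(\prod_s a_{j_s i_{\tau(s)}}\big)$. Here the matrix $\big(\sum_{i} g_{i j} \,(\,\cdot\, \text{in position } i)\big)$ does collapse because $a_{j_s i_{\tau(s)}}$ depends on $i_{\tau(s)}$ and $g_{i_s j_s}$ on $i_s$; reindexing the product over $s$ by $\tau$ on one side lines up $i_{\tau(s)}$ with a $g$-factor $g_{i_{\tau(s)} j_{\tau(s)}}$, so $\sum_{i_{\tau(s)}} g_{i_{\tau(s)} j_{\tau(s)}} a_{j_s i_{\tau(s)}}$ — hmm, this is $(Ag)$-ish. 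I expect this bookkeeping — matching each free index $i_t$ with exactly one $g$-factor and exactly one $a$-factor, then recognizing the result as $\frac{1}{n!}\sum_J \operatorname{row-imm}^{*\lambda}(Ag)_{JJ}$ — to be the \emph{main obstacle}: it is the heart of why "left multiplication $=$ right multiplication under $\operatorname{imm}^{*\lambda}_n$", entirely analogous to the trace identity $\operatorname{tr}(gA) = \operatorname{tr}(Ag)$, and the same index-chasing that proves Proposition~\ref{prop:invariance_of_imm_n} for the ordinary immanant. Once the lemma holds, the theorem follows instantly: $\operatorname{imm}^{*\lambda}_n gAg^{-1} = \operatorname{imm}^{*\lambda}_n (g^{-1}g)A = \operatorname{imm}^{*\lambda}_n A$.
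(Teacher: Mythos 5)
Your final ``cleanest route'' is in fact the paper's own argument (the paper starts from $\frac{1}{n!}\sum_I\operatorname{column-imm}^{*\lambda}(gA)_{II}$, moves each scalar $g$-factor next to the $a$-factor carrying the same summation index, sums over $I$, and reads off $\frac{1}{n!}\sum_J\operatorname{row-imm}^{*\lambda}(Ag)_{JJ}$ after substituting $\tau=\sigma^{-1}$), but you stop exactly at the decisive step and declare the index bookkeeping ``the main obstacle.'' That bookkeeping closes cleanly, and you should carry it out: expanding $\operatorname{row-imm}^{*\lambda}(gA)_{II}=\sum_{\tau}\chi^{*\lambda}(\tau)\prod_{s}(gA)_{i_s i_{\tau(s)}}$ and inserting $(gA)_{i_s i_{\tau(s)}}=\sum_{j_s}g_{i_s j_s}a_{j_s i_{\tau(s)}}$, each index $i_t$ occurs in exactly one $g$-factor, namely $g_{i_t j_t}$, and in exactly one $a$-factor, namely $a_{j_{\tau^{-1}(t)}i_t}$; since the $g_{ij}$ are central scalars they may be moved through the noncommuting product without disturbing the order of the $a$-factors, so placing $g_{i_t j_t}$ immediately to the right of $a_{j_{\tau^{-1}(t)}i_t}$ and summing over $i_t\in[N]$ gives $(Ag)_{j_s j_{\tau(s)}}$ with $s=\tau^{-1}(t)$, hence $\sum_{I}\operatorname{row-imm}^{*\lambda}(gA)_{II}=\sum_{J}\operatorname{row-imm}^{*\lambda}(Ag)_{JJ}$, which is the lemma. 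Your earlier claim that this sum ``does not collapse nicely'' is simply false --- it collapses precisely because $\tau$ is a bijection, so the pairing of each $i_t$ with one $g$ and one $a$ is automatic; this is the trace-cyclicity mechanism you yourself invoke as an analogy.

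The detours you propose in the middle are not available and should be removed. The bilinear expressions $\frac{\chi^{\lambda}(1)}{n!}\sum_{\sigma,\tau}\chi^{*\lambda}(\tau)\chi^{\lambda}(\sigma^{-1})\,b_{i_{\sigma(1)}i_{\tau(1)}}\cdots$ and the permutation-invariance relations for $A^{\sigma}$ are established in the paper only when the matrix entries commute (or anticommute) \emph{with each other}; the lemma is asserted for arbitrary $A\in\operatorname{Mat}_{N,N}(\mathcal{A})$, and ``the entries of $gA$ commute with $\mathbb{C}$'' does not justify those identities --- indeed the whole point of the theorem is that conjugation invariance of $\operatorname{imm}^{*\lambda}_n$ holds with no commutativity hypothesis on the entries of $A$. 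So as written the proposal contains a genuine gap: the only route that works for general $\mathcal{A}$ is the direct expansion you sketch at the end, and it is left unexecuted. Your final reduction of the theorem to the lemma ($\operatorname{imm}^{*\lambda}_n gAg^{-1}=\operatorname{imm}^{*\lambda}_n g^{-1}gA=\operatorname{imm}^{*\lambda}_n A$) is correct.
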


\begin{proof}
   We denote the $(i,j)$th entries of~$A$ and $g$ by~$a_{ij}$ and $g_{ij}$, respectively.
   We see the assertion from the following calculation:
   \begin{align*}
      \operatorname{imm}^{*\lambda}_n gA
      &= \frac{1}{n!} \sum_{I \in [N]^n} \operatorname{column-imm}^{*\lambda} (gA)_{II} 
      \allowdisplaybreaks\\
      &= \frac{1}{n!} \sum_{I \in [N]^n} \sum_{\sigma \in \mathfrak{S}_n}
      \chi^{\lambda}(\sigma^{-1})
      (gA)_{i_{\sigma(1)} i_1} (gA)_{i_{\sigma(2)} i_2}\cdots 
      (gA)_{i_{\sigma(n)} i_n} 
      \allowdisplaybreaks\\
      &= \frac{1}{n!} \sum_{I \in [N]^n} \sum_{J \in [N]^n} \sum_{\sigma \in \mathfrak{S}_n}
      \chi^{\lambda}(\sigma^{-1})
      g_{i_{\sigma(1)} j_1} a_{j_1 i_1} 
      g_{i_{\sigma(2)} j_2} a_{j_2 i_2} \cdots 
      g_{i_{\sigma(n)} j_n} a_{j_n i_n} 
      \allowdisplaybreaks\\
      &= \frac{1}{n!} \sum_{I \in [N]^n} \sum_{J \in [N]^n} \sum_{\sigma \in \mathfrak{S}_n}
      \chi^{\lambda}(\sigma^{-1})
      a_{j_1 i_1} g_{i_1 j_{\sigma^{-1}(1)}}
      a_{j_2 i_2} g_{i_2 j_{\sigma^{-1}(2)}} \cdots 
      a_{j_n i_n} g_{i_n j_{\sigma^{-1}(n)}} 
      \allowdisplaybreaks\\
      &= \frac{1}{n!} \sum_{I \in [N]^n} \sum_{J \in [N]^n} \sum_{\tau \in \mathfrak{S}_n}
      \chi^{\lambda}(\tau)
      a_{j_1 i_1} g_{i_1 j_{\tau(1)}}
      a_{j_2 i_2} g_{i_2 j_{\tau(2)}} \cdots 
      a_{j_n i_n} g_{i_n j_{\tau(n)}} 
      \allowdisplaybreaks\\
      &= \frac{1}{n!} \sum_{J \in [N]^n} \sum_{\tau \in \mathfrak{S}_n}
      \chi^{\lambda}(\tau)
      (Ag)_{j_1 j_{\tau(1)}} 
      (Ag)_{j_2 j_{\tau(2)}} \cdots 
      (Ag)_{j_n j_{\tau(n)}} 
      \allowdisplaybreaks\\
      &= \frac{1}{n!} \sum_{J \in [N]^n} \operatorname{row-imm}^{*\lambda} (Ag)_{JJ} 
      \allowdisplaybreaks\\
      &= \operatorname{imm}^{*\lambda}_n Ag.
   \end{align*}
   Here, in the fourth equality, we changed the positions of the entries of~$g$.
   Moreover, in the fifth equality, we replaced $\sigma^{-1}$ by~$\tau$.
\end{proof}

However, the function $\operatorname{imm}^{*\lambda}_n$ 
is almost trivial for matrices with commuting entries.
Indeed we see the following from~(\ref{eq:twistedness_of_imm*}).
This can be regarded as the counterpart of 
Proposition~\ref{prop:imm_n_for_matrices_with_anticommuting_entries}.

\begin{proposition}\slshape
   We have $\operatorname{imm}^{*\lambda}_n A = 0$ for~$n>1$,
   when the entries of~$A$ commute with each other.
\end{proposition}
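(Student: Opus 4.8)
The plan is to mimic exactly the argument indicated after equation~(\ref{eq:imm_for_matrices_with_anticommuting_entries}) for the ordinary immanant (Proposition~\ref{prop:imm_n_for_matrices_with_anticommuting_entries}), now using the analogous relation~(\ref{eq:twistedness_of_imm*}) for the twisted immanant. First I would fix a square matrix $A$ whose entries commute with each other, and pick $\sigma\in\mathfrak{S}_n$ to be any transposition, say $\sigma=(1\,2)$, so that $\operatorname{sgn}(\sigma)=-1$. The crucial observation is that for a fixed multi-index $I=(i_1,\ldots,i_n)\in[N]^n$, the submatrix $(A_{II})^{\sigma}$ obtained by simultaneously permuting its rows and columns by $\sigma$ equals $A_{I'I'}$, where $I'=(i_{\sigma(1)},\ldots,i_{\sigma(n)})=\sigma\cdot I$ is the multi-index with the first two entries swapped.

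Next I would apply~(\ref{eq:twistedness_of_imm*}) entrywise: since the entries of $A$ commute,
\[
   \operatorname{row-imm}^{*\lambda} A_{I'I'}
   = \operatorname{row-imm}^{*\lambda} (A_{II})^{\sigma}
   = \operatorname{sgn}(\sigma)\,\operatorname{row-imm}^{*\lambda} A_{II}
   = -\operatorname{row-imm}^{*\lambda} A_{II}.
\]
Now I would sum over all $I\in[N]^n$. The map $I\mapsto \sigma\cdot I=I'$ is a bijection of $[N]^n$ onto itself (it is an involution), so reindexing the sum gives
\[
   \sum_{I\in[N]^n}\operatorname{row-imm}^{*\lambda} A_{II}
   = \sum_{I\in[N]^n}\operatorname{row-imm}^{*\lambda} A_{I'I'}
   = -\sum_{I\in[N]^n}\operatorname{row-imm}^{*\lambda} A_{II}.
\]
Dividing by $n!$, this says $\operatorname{imm}^{*\lambda}_n A=-\operatorname{imm}^{*\lambda}_n A$, hence $\operatorname{imm}^{*\lambda}_n A=0$ (we are over $\mathbb{C}$, or at worst over a $\mathbb{C}$-algebra, so $2$ is invertible). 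The hypothesis $n>1$ is used precisely to guarantee that a transposition exists in $\mathfrak{S}_n$; for $n=1$ there is no nontrivial permutation and indeed $\operatorname{imm}^{*\lambda}_1 A=\operatorname{tr}(A)\cdot(\text{const})$ need not vanish.

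I do not expect any serious obstacle here — the entire content is the sign flip from~(\ref{eq:twistedness_of_imm*}) together with the reindexing bijection. The one point that deserves a sentence of care is the legitimacy of applying~(\ref{eq:twistedness_of_imm*}) to the submatrix $A_{II}$: its entries are among the entries of $A$, which commute with each other by assumption, so the commuting-entries hypothesis of~(\ref{eq:twistedness_of_imm*}) is met. (If one prefers, the same computation works verbatim with $\operatorname{column-imm}^{*\lambda}$ in place of $\operatorname{row-imm}^{*\lambda}$, using the second equality in the definition of $\operatorname{imm}^{*\lambda}_n$.) This is the exact mirror of the proof of Proposition~\ref{prop:imm_n_for_matrices_with_anticommuting_entries}, with the roles of ``commute'' and ``anticommute'' interchanged.
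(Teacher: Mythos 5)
Your proof is correct and takes exactly the route the paper intends: the paper deduces this proposition directly from the sign relation~(\ref{eq:twistedness_of_imm*}), mirroring Proposition~\ref{prop:imm_n_for_matrices_with_anticommuting_entries}, and your argument simply spells out the implicit details (the identity $(A_{II})^{\sigma}=A_{I'I'}$, the reindexing bijection $I\mapsto\sigma\cdot I$, and the use of a transposition, which is where $n>1$ enters). No gaps.
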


%
\section{Twisted immanant and matrices with anticommuting entries}
\label{sec:matrices_with_anticommuting_entries}
%
%
In the remainder of this article, we deal with the twisted immanant
for matrices with anticommuting entries.

In this section, we look at Theorem~\ref{thm(introduction):imm*_n_and_tr},
a relation between $\operatorname{imm}^{*\lambda}_n$ and the traces of powers.
We already saw that the function $\operatorname{imm}^{*\lambda}_n$ 
is almost trivial for matrices with commuting entries,
but this function has interesting properties for matrices with anticommuting entries.

We prove Theorem~\ref{thm(introduction):imm*_n_and_tr}
using the following easy lemma (see \cite{R} for the proof):

\begin{lemma}\label{lem:trace_of_A^2k}\slshape
   When the entries of an $N \times N$ matrix $A$ anticommute with each other, we have 
   $\operatorname{tr}(A^2) = \operatorname{tr}(A^4) = \operatorname{tr}(A^6) = \cdots = 0$.
\end{lemma}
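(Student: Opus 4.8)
The plan is to use the cyclic invariance of the trace, keeping track of the sign that a product of anticommuting elements picks up under a cyclic rotation of its factors. Expanding along the entries of $A$,
\[
   \operatorname{tr}(A^{2k})
   = \sum_{(i_1,\dots,i_{2k}) \in [N]^{2k}}
   a_{i_1 i_2}\, a_{i_2 i_3} \cdots a_{i_{2k-1} i_{2k}}\, a_{i_{2k} i_1},
\]
I would introduce the cyclic shift $R(i_1,\dots,i_{2k}) = (i_2,\dots,i_{2k},i_1)$, which is a bijection of the finite index set $[N]^{2k}$ onto itself.

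The first step is to track how $R$ acts on a single summand. The monomial indexed by $R(i_1,\dots,i_{2k})$ is $a_{i_2 i_3}\cdots a_{i_{2k} i_1}\, a_{i_1 i_2}$, which is obtained from the monomial indexed by $(i_1,\dots,i_{2k})$ by transporting the leftmost factor $a_{i_1 i_2}$ past the remaining $2k-1$ factors to the far right. Since any two entries of $A$ anticommute, each of these $2k-1$ adjacent transpositions contributes a factor $-1$, so the monomial is multiplied by $(-1)^{2k-1} = -1$.

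The second step is purely formal: because $R$ merely permutes the summation index, re-summing the rotated monomials recovers $\operatorname{tr}(A^{2k})$, so the sign computation yields $\operatorname{tr}(A^{2k}) = -\operatorname{tr}(A^{2k})$, whence $\operatorname{tr}(A^{2k}) = 0$; the argument is uniform in $k$, so it disposes of $\operatorname{tr}(A^2) = \operatorname{tr}(A^4) = \cdots = 0$ simultaneously. There is no serious obstacle here — the only point needing a little care is the combinatorial bookkeeping in the sign step (that pushing one factor all the way around a cycle of length $2k$ uses exactly $2k-1$ swaps, and that one needs only the global anticommutativity, not the degenerate relations $a_{ii}^2 = 0$). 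An alternative packaging via $\operatorname{tr}(A^{2k}) = \operatorname{tr}(A\cdot A^{2k-1})$ together with a signed version of $\operatorname{tr}(XY) = \operatorname{tr}(YX)$ is also available, but the direct rotation argument above is the most transparent.
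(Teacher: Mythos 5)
Your proof is correct. Note that the paper does not prove this lemma itself but simply cites Rosset~\cite{R}; your cyclic-rotation argument is the standard one and is equivalent to the ``signed cyclicity'' $\operatorname{tr}(XY)=-\operatorname{tr}(YX)$ for odd matrices used there (applied to $X=A$, $Y=A^{2k-1}$), so nothing essentially new or problematic is involved. The sign bookkeeping is sound: moving the first factor past $2k-1$ others gives $(-1)^{2k-1}=-1$ even when some factors coincide, since the paper's definition of anticommutativity imposes $xy=-yx$ for \emph{all} pairs, including $x=y$; and the final step $\operatorname{tr}(A^{2k})=-\operatorname{tr}(A^{2k})\Rightarrow\operatorname{tr}(A^{2k})=0$ is legitimate because $\mathcal{A}$ is a $\mathbb{C}$-algebra.
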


\begin{proof}[Proof of Theorem~\textsl{\ref{thm(introduction):imm*_n_and_tr}}]
   We denote the $(i,j)$th entry of~$A$ by~$a_{ij}$.
   Then we have
   \begin{align*}
      \operatorname{imm}^{* \lambda}_n A
      &= \frac{1}{n!} \sum_{I \in [N]^n} \operatorname{row-imm}^{* \lambda} A_{II} \\
      &= \frac{1}{n!} \sum_{I \in [N]^n} \sum_{\tau \in \mathfrak{S}_n} 
      \chi^{* \lambda}(\tau) 
      a_{i_1 i_{\tau(1)}} \cdots a_{i_n i_{\tau(n)}} \\
      &= \frac{1}{n!} 
      \sum_{\tau \in \mathfrak{S}_n}  
      \chi^{* \lambda}(\tau) 
      \sum_{I \in [N]^n}
      a_{i_1 i_{\tau(1)}} \cdots a_{i_n i_{\tau(n)}}.
   \end{align*}
   Denoting the cycle type of~$\tau$ by~$\mu = (\mu_1,\ldots,\mu_r)$, we have
   \[
      \sum_{I \in [N]^n}
      a_{i_1 i_{\tau(1)}} \cdots a_{i_n i_{\tau(n)}}
      = f(\tau)
      \operatorname{tr}(A^{\mu_1}) \cdots \operatorname{tr}(A^{\mu_r}).
   \]
   This vanishes unless $\mu \in P_{\mathrm{strict,odd}}(n)$ 
   as seen from Lemma~\ref{lem:trace_of_A^2k}.
   Since $|C_{\mu}| = n! / z_{\mu} = n! / \mu_1 \cdots \mu_r$ 
   for~$\mu \in P_{\mathrm{strict,odd}}(n)$, 
   we see the assertion from~(\ref{eq:description_of_chi*}).
\end{proof}

%
\section{$GL(V)$-invariants in $\Lambda(V \otimes V^*)$}
\label{sec:GL-invariants}
%
%
In this section,
we develop Theorem~\ref{thm(introduction):imm*_n_and_tr} to
a description of $\Lambda(V \otimes V^*)^{GL(V)}$,
the set of all $GL(V)$-invariants in the exterior algebra $\Lambda(V \otimes V^*)$.
Namely we prove Theorem~\ref{thm(introduction):basis_of_GL-invariants},
a description of these invariants in terms of the twisted immanant.

Let $V$ be a complex vector space of dimension $N$.
We put 
\[
   a_{ij} = e_i \otimes e^*_j \in V \otimes V^* \subset \Lambda(V \otimes V^*),
\]
where $\{ e_1,\ldots,e_N \}$ are a basis of~$V$, 
and $\{ e^*_1,\ldots,e^*_N \}$ are the dual basis.
Let us consider the matrix 
$A = (a_{ij})_{1 \leq i,j \leq N} \in \operatorname{Mat}_{N,N}(\Lambda(V \otimes V^*))$.
We can regard this matrix as the most generic matrix among the square matrices with anticommuting entries. 
Using this matrix, we can describe $\Lambda(V \otimes V^*)^{GL(V)}$ as follows
(this is a consequence of the first fundamental theorem of invariant theory for vector invariants;
see \cite{I2} for the proof):

\begin{theorem}\label{thm:generators_of_GL-invariants}\slshape
   The following elements generate $\Lambda(V \otimes V^*)^{GL(V)}$\textnormal{:}
   \[
      \operatorname{tr}(A), \,
      \operatorname{tr}(A^3), \,
      \ldots, \,
      \operatorname{tr}(A^{2N-3}), \,
      \operatorname{tr}(A^{2N-1}).
   \]
   Moreover 
   these elements anticommute with each other,
   and have no relations besides this anticommutativity.
   Namely the following forms a linear basis of~$\Lambda(V \otimes V^*)^{GL(V)}$\textnormal{:}
   \[
      \big\{ \operatorname{tr}(A^{\mu_1}) \cdots \operatorname{tr}(A^{\mu_r}) \,\big|\,
      (\mu_1,\ldots,\mu_r) \in P_{\mathrm{strict,odd}},\,\,
      \mu_1 < 2N \big\}.
   \]
\end{theorem}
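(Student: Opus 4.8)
The plan is to prove the theorem by squeezing $\Lambda(V\otimes V^*)^{GL(V)}$ between an explicit spanning set of size $2^N$, produced from the first fundamental theorem of invariant theory, and the exact dimension $2^N$, computed from the dual Cauchy identity; the linear independence of the claimed basis, the fact that the traces generate, and the assertion that there are no relations beyond anticommutativity then all fall out of this dimension match.

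First I would record the elementary structure of the proposed generators. Each $\operatorname{tr}(A^{2k-1})$ is a sum of monomials in $2k-1$ of the anticommuting generators $a_{ij}$, hence an odd element of the supercommutative algebra $\Lambda(V\otimes V^*)$; so any two of them anticommute and each squares to $0$. Moreover $\operatorname{tr}(A^{2k})=0$ by Lemma~\ref{lem:trace_of_A^2k}, and the anticommuting Cayley--Hamilton relation~(\ref{eq:anticommuting_Cayley-Hamilton}) gives $A^{2N}=0$, whence $\operatorname{tr}(A^m)=0$ for all $m\ge 2N$; in particular $\operatorname{tr}(A^{2k-1})=0$ for $k>N$. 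Consequently the subalgebra $\mathcal{B}\subseteq\Lambda(V\otimes V^*)^{GL(V)}$ generated by $\operatorname{tr}(A),\operatorname{tr}(A^3),\ldots,\operatorname{tr}(A^{2N-1})$ is spanned by the products $\operatorname{tr}(A^{\mu_1})\cdots\operatorname{tr}(A^{\mu_r})$ indexed by $(\mu_1,\ldots,\mu_r)\in P_{\mathrm{strict,odd}}$ with $\mu_1<2N$, and there are exactly $2^N$ such products, one for each subset of $\{1,3,\ldots,2N-1\}$.

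Next I would show that $\mathcal{B}$ exhausts the invariants, using the first fundamental theorem. Let $\pi\colon(V\otimes V^*)^{\otimes n}\to\Lambda^n(V\otimes V^*)$ be the antisymmetrizer, a surjective $GL(V)$-equivariant map; since $GL(V)$ is linearly reductive, $\pi$ maps $\big((V\otimes V^*)^{\otimes n}\big)^{GL(V)}$ onto $\Lambda^n(V\otimes V^*)^{GL(V)}$. Identifying $(V\otimes V^*)^{\otimes n}$ with $\operatorname{End}(V^{\otimes n})$, Schur--Weyl duality together with the classical $GL(V)$ invariant theory shows that the invariant tensors are spanned by the operators $\Theta_w$, $w\in\mathfrak{S}_n$, whose natural contractions multiply traces of products along the cycles of $w$. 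A direct computation — the one already used in the proof of Theorem~\ref{thm(introduction):imm*_n_and_tr}, summed over a conjugacy class and with the sign $f(w)$ absorbed — identifies $\pi(\Theta_w)$, up to a nonzero scalar, with $\prod_{c}\operatorname{tr}(A^{|c|})$, the product over the cycles $c$ of $w$; this vanishes whenever $w$ has a cycle of even length (Lemma~\ref{lem:trace_of_A^2k}) or two cycles of equal length (a square of an odd element). Hence $\Lambda^n(V\otimes V^*)^{GL(V)}\subseteq\mathcal{B}$, so the $2^N$ products of the previous paragraph span $\Lambda(V\otimes V^*)^{GL(V)}$.

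Finally I would pin down the exact dimension. The dual Cauchy identity gives the $GL(V)$-decomposition $\Lambda(V\otimes V^*)\cong\bigoplus_\lambda S_\lambda(V)\otimes S_{\lambda'}(V^*)$ over all partitions $\lambda$, with $S_\lambda(V)=0$ for $\ell(\lambda)>N$; using $S_{\lambda'}(V^*)\cong S_{\lambda'}(V)^*$ and Schur's lemma, the invariant part of the $\lambda$-summand is one-dimensional precisely when $\lambda=\lambda'$ and $\ell(\lambda)=\lambda_1\le N$, and zero otherwise. Summing, $\dim\Lambda(V\otimes V^*)^{GL(V)}=\#\{\lambda\in P_{\mathrm{self\text{-}conj}}:\lambda_1\le N\}$, which via the bijection $h$ equals $\#\{(\mu_1,\ldots,\mu_r)\in P_{\mathrm{strict,odd}}:\mu_1\le 2N-1\}=2^N$. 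Matching this against the spanning set of size $2^N$ forces the $2^N$ products $\operatorname{tr}(A^{\mu_1})\cdots\operatorname{tr}(A^{\mu_r})$ to be a basis; equivalently, the natural surjection from the exterior algebra on symbols $\xi_1,\xi_3,\ldots,\xi_{2N-1}$ onto $\Lambda(V\otimes V^*)^{GL(V)}$ is an isomorphism, which is exactly the generation, the absence of relations besides anticommutativity, and the stated basis assertion. The routine parts are paragraph~two and the dimension count in paragraph~four; the main obstacle is the identification in paragraph~three of $\pi(\Theta_w)$ with $\prod_c\operatorname{tr}(A^{|c|})$ and of its precise vanishing pattern, i.e.\ correctly bookkeeping the signs coming from the antisymmetrizer and from the anticommuting entries and verifying that no further invariant tensor survives — this is the genuinely delicate step, and is essentially the ``vector invariants'' computation cited from~\cite{I2}.
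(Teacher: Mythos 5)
Your proposal is correct, but note that the paper itself does not prove this theorem: it is quoted from \cite{I2}, with the remark that it is a consequence of the first fundamental theorem of invariant theory for vector invariants. Your spanning step is essentially that cited route made explicit: Schur--Weyl duality (equivalently the FFT for mixed tensors) identifies $\bigl((V\otimes V^*)^{\otimes n}\bigr)^{GL(V)}$ with the span of the permutation tensors, linear reductivity pushes this through the equivariant projection onto $\Lambda^n(V\otimes V^*)$, and the evaluation of the image as $\pm\prod_c \operatorname{tr}(A^{|c|})$ is exactly the index computation already used in the proof of Theorem~\ref{thm(introduction):imm*_n_and_tr}, with the vanishing pattern governed by Lemma~\ref{lem:trace_of_A^2k} and by oddness of the traces. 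Where you genuinely depart from the paper's indicated route is the independence/no-relations half: instead of deferring to \cite{I2} or to the classical identification of $\Lambda(V\otimes V^*)^{GL(V)}$ with the cohomology ring $H^{\bullet}(\mathfrak{gl}(V))$ as an exterior algebra on primitive generators (which the paper alludes to via \cite{Me} and \cite{K1}), you compute $\dim \Lambda(V\otimes V^*)^{GL(V)}=2^N$ from the dual Cauchy decomposition $\Lambda(V\otimes V^*)\cong\bigoplus_\lambda S_\lambda(V)\otimes S_{\lambda'}(V)^*$ and match it against your $2^N$-element spanning set; this gives a self-contained argument and simultaneously yields the basis statement and the exterior-algebra presentation. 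Two points deserve explicit citation rather than being treated as routine: (i) cutting the spanning set down to parts $\mu_1<2N$ requires $\operatorname{tr}(A^m)=0$ for odd $m\geq 2N+1$, which you obtain from $A^{2N}=0$, i.e.\ from the anticommuting Cayley--Hamilton relation~(\ref{eq:anticommuting_Cayley-Hamilton}) of Theorem~\ref{thm:anticommuting_Cayley-Hamilton} (due to \cite{BPS}, \cite{P}, \cite{I2}); this is non-circular, since those results are independent of the present theorem, but your dimension count cannot replace it; and (ii) the identification of the invariant part of $S_\lambda(V)\otimes S_{\lambda'}(V)^*$ as one-dimensional exactly when $\lambda=\lambda'$ with $\lambda_1\leq N$ should be stated with the length restriction $\ell(\lambda)\leq N$ built into the Cauchy sum, as you do. With these citations in place the argument is complete.
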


Combining this with Theorem~\ref{thm(introduction):imm*_n_and_tr},
we have Theorem~\ref{thm(introduction):basis_of_GL-invariants}.

The results in Sections~\ref{sec:matrices_with_anticommuting_entries} and \ref{sec:GL-invariants}
are closely related to the following Cayley--Hamilton type theorem
(\cite{BPS}, \cite{P}, \cite{I2}; see also \cite{DPP}, \cite{D}):

\begin{theorem}\label{thm:anticommuting_Cayley-Hamilton}\slshape
   We have
   \[
      N A^{2N-1} 
      - \operatorname{tr}(A) A^{2N-2} 
      - \operatorname{tr}(A^3) A^{2N-4}
      - \cdots 
      - \operatorname{tr}(A^{2N-3}) A^2 
      - \operatorname{tr}(A^{2N-1}) A^0
      = 0.
   \]
\end{theorem}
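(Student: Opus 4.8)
\textbf{Proof proposal for Theorem~\ref{thm:anticommuting_Cayley-Hamilton}.}

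The plan is to deduce this Cayley--Hamilton type identity from the structure of $\Lambda(V \otimes V^*)^{GL(V)}$ as described in Theorem~\ref{thm:generators_of_GL-invariants}, working with the generic matrix $A = (e_i \otimes e_j^*)$ of anticommuting entries. First I would observe that it suffices to prove the identity for this single generic $A$: any $N \times N$ matrix with anticommuting entries over an arbitrary $\mathbb{C}$-algebra $\mathcal{A}$ is the image of the generic $A$ under an algebra homomorphism $\Lambda(V \otimes V^*) \to \mathcal{A}$, $a_{ij} \mapsto$ (the given entries), and both sides of the asserted relation are polynomials in the $a_{ij}$ that are compatible with such homomorphisms. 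So the whole problem reduces to a computation inside the exterior algebra $\Lambda(V \otimes V^*)$.

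Next I would consider the $(p,q)$ entry of the matrix on the left-hand side, which is an element of $\Lambda(V \otimes V^*)$, and show it is annihilated. The key is that the powers $A^k$ have a transparent description: writing $A = \sum_{i,j} (e_i \otimes e_j^*)\, E_{ij}$ as an element of $\Lambda(V\otimes V^*) \otimes \operatorname{Mat}_{N,N}(\mathbb{C})$, the entries of $A^k$ are sums of products $a_{p\,i_1} a_{i_1 i_2} \cdots a_{i_{k-1} q}$, and because the $a_{ij}$ anticommute (in particular $a_{ij}^2 = 0$), such a product vanishes unless the indices $p, i_1, \ldots, i_{k-1}, q$ are suitably distinct, and the anticommutation rearranges products into a normal form with a sign. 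Combining this with Lemma~\ref{lem:trace_of_A^2k}, which kills $\operatorname{tr}(A^{2m})$, one sees that the only traces that enter are $\operatorname{tr}(A^{2N-1}), \operatorname{tr}(A^{2N-3}), \ldots, \operatorname{tr}(A)$, exactly the generators in Theorem~\ref{thm:generators_of_GL-invariants}. Each term $\operatorname{tr}(A^{2m-1}) A^{2N-2m}$ is then a sum, over index sets, of monomials in the $a_{ij}$ of total degree $2N-1$ in the exterior algebra; I would collect these monomials and exhibit a sign-reversing cancellation that makes the whole alternating-type sum vanish, the leading term $N A^{2N-1}$ supplying precisely the multiplicity needed to cancel the contributions of the trace terms.

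The cleanest route for that cancellation is to interpret the left-hand side as (a component of) an invariant differential form: under the identification of $\Lambda(V \otimes V^*)^{GL(V)}$ with the Lie algebra cohomology $H^{\bullet}(\mathfrak{gl}(V))$ mentioned in Section~\ref{subsec:related_matters}, the relation $N A^{2N-1} = \sum_{m=1}^{N} \operatorname{tr}(A^{2m-1}) A^{2N-2m}$ becomes a statement that a certain degree-$(2N-1)$ expression in the primitive generators $\operatorname{tr}(A), \operatorname{tr}(A^3), \ldots, \operatorname{tr}(A^{2N-1})$ of the exterior-algebra cohomology is forced by the top-degree generator being, up to scalar, $\operatorname{tr}(A^{2N-1})$ itself. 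Concretely, one pairs the identity against the basis of $\Lambda(V\otimes V^*)^{GL(V)}$ from Theorem~\ref{thm:generators_of_GL-invariants} and checks the coefficient of each monomial $\operatorname{tr}(A^{\mu_1})\cdots\operatorname{tr}(A^{\mu_r})$; since those monomials are linearly independent, it is enough to verify the scalar identities among the coefficients, which follow from Newton-type recursions for $\operatorname{tr}(A^k)$ specialized to the anticommuting setting (only odd powers survive by Lemma~\ref{lem:trace_of_A^2k}).

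The main obstacle I anticipate is organizing the sign bookkeeping in the exterior algebra when expanding $\operatorname{tr}(A^{2m-1}) A^{2N-2m}$ as an explicit sum of monomials $a_{p i_1}\cdots a_{i_{2N-2} q}$ and matching it term-by-term against the expansion of $N A^{2N-1}$: one must track both the signs coming from reordering anticommuting factors into normal form and the combinatorial multiplicities of how a given normal-form monomial arises from different index patterns. The slick cohomological argument hides this inside the known structure of $H^\bullet(\mathfrak{gl}(V))$, but making the coefficient-matching fully rigorous still requires either invoking that structure precisely (via Theorem~\ref{thm:generators_of_GL-invariants}) or carrying out the direct sign-reversing involution, and that combinatorial core is where the real work lies.
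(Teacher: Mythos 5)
There is a genuine gap here, and also a point of context worth knowing: the paper does not prove Theorem~\ref{thm:anticommuting_Cayley-Hamilton} at all --- it is quoted from \cite{BPS}, \cite{P} and \cite{I2} --- so the burden of a ``proof proposal'' is to actually supply an argument, and yours does not. Your first reduction is fine: since $\Lambda(V\otimes V^*)$ is the free algebra on anticommuting generators, every matrix with anticommuting entries is the image of the generic matrix $A=(e_i\otimes e_j^*)$ under an algebra homomorphism, so it suffices to treat that one matrix. But from there the argument is only announced, not carried out: the ``sign-reversing cancellation'' matching $N A^{2N-1}$ against $\sum_m \operatorname{tr}(A^{2m-1})A^{2N-2m}$ is exactly the content of the theorem, and you concede in your last paragraph that this combinatorial core is left undone. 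Lemma~\ref{lem:trace_of_A^2k} only tells you that even traces vanish; it does nothing toward the cancellation itself.

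The proposed shortcut through invariant theory is moreover misdirected. The left-hand side of the identity is a matrix whose entries lie in $\Lambda(V\otimes V^*)$ but are \emph{not} $GL(V)$-invariant --- under conjugation they transform in the adjoint representation --- so you cannot expand them in the basis of $\Lambda(V\otimes V^*)^{GL(V)}$ given by Theorem~\ref{thm:generators_of_GL-invariants} and compare coefficients of the monomials $\operatorname{tr}(A^{\mu_1})\cdots\operatorname{tr}(A^{\mu_r})$. What that strategy actually requires is a description of the space of adjoint \emph{covariants} $\bigl(\Lambda(V\otimes V^*)\otimes\operatorname{End}(V)\bigr)^{GL(V)}$, which is the substantive input behind the published proofs (cf.\ \cite{DPP}, \cite{D}, \cite{BPS}, \cite{P}, \cite{I2}) and is nowhere established in your sketch. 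Finally, the appeal to ``Newton-type recursions for $\operatorname{tr}(A^k)$ specialized to the anticommuting setting'' cannot work even in principle: Theorem~\ref{thm:generators_of_GL-invariants} states that $\operatorname{tr}(A),\operatorname{tr}(A^3),\ldots,\operatorname{tr}(A^{2N-1})$ satisfy no relations beyond anticommutativity, so the Cayley--Hamilton type identity is not a consequence of any algebraic relations among the invariants; it lives at the level of covariants and must be proved there. As it stands, the proposal is a plausible plan with the decisive step missing and its main structural idea resting on a category error about which elements are invariant.
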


In this equality, the elements
$\operatorname{tr}(A),\operatorname{tr}(A^3),\ldots,\operatorname{tr}(A^{2N-1})$ 
play the role of coefficients of the characteristic polynomial. 
Thus, it is expected to describe $\operatorname{tr}(A^{2k+1})$
in terms of a determinant-type function,
and this is actually achieved in Theorem~\ref{thm(introduction):imm*_n_and_tr}.
This is one of the motivations for this paper.

As written in Section~\ref{subsec:related_matters},
Theorem~\ref{thm:anticommuting_Cayley-Hamilton} can be regarded as a refinement of
the Amitsur--Levitzki theorem.
We note that Kostant proved the Amitsur--Levitzki theorem
using the cohomology ring of the Lie algebra $\mathfrak{gl}(V)$ 
(which is isomorphic to~$\Lambda(V \otimes V^*)^{GL(V)}$)
and the function $\chi^{*\lambda}$ in~\cite{K1}.
In this sense, basic ideas of the present article can be found in this paper due to Kostant.

%
\section{Cauchy type identity}
%
%
Finally, we discuss Theorem~\ref{thm(introduction):anticommuting_Cauchy_identities},
namely an Cauchy type identity for the twisted immanant.
Let us start with the ordinary Cauchy identities for symmetric polynomials (Section~I.4 of~\cite{Ma}):

\begin{proposition}\label{prop:Cauchy_identities(polynomial)}\slshape
   We have
   \begin{align*}
      \prod_{1 \leq i \leq M} \prod_{1 \leq j \leq N} \frac{1}{1 - x_i y_j} 
      &= \sum_{\lambda \in P} s_{\lambda}(x_1,\ldots,x_M) s_{\lambda}(y_1,\ldots,y_N) \\
      &= \sum_{\mu \in P} \frac{1}{z_{\mu}}p_{\mu}(x_1,\ldots,x_M) p_{\mu}(y_1,\ldots,y_N), 
      \allowdisplaybreaks\\
      \prod_{1 \leq i \leq M} \prod_{1 \leq j \leq N} (1 + x_i y_j) 
      &= \sum_{\lambda \in P} s_{\lambda}(x_1,\ldots,x_M) s_{\lambda'}(y_1,\ldots,y_N) \\
      &= \sum_{\mu \in P} (-1)^{n-r} \frac{1}{z_{\mu}} 
      p_{\mu}(x_1,\ldots,x_M) p_{\mu}(y_1,\ldots,y_N).
   \end{align*}
   Here $r$ is the length of~$\mu$.
\end{proposition}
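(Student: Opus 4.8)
The plan is to prove each of the two displayed chains of equalities by first establishing the right-hand \emph{power-sum} expansion directly from the product on the left, and then matching it with the \emph{Schur} expansion by means of the characteristic map for $\mathfrak{S}_n$.

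First I would treat the power-sum side. Taking the logarithm of $\prod_{i,j}(1-x_iy_j)^{-1}$ and using $-\log(1-t)=\sum_{k\ge 1}t^k/k$ gives $\sum_{k\ge 1}k^{-1}p_k(x)p_k(y)$, where $p_k$ denotes the $k$th power sum. Exponentiating and expanding each factor $\exp(k^{-1}p_k(x)p_k(y))$ as a power series, the coefficient of $p_\mu(x)p_\mu(y)$ with $\mu=(1^{m_1}2^{m_2}\cdots)$ comes out as $\prod_k (m_k!\,k^{m_k})^{-1}=z_\mu^{-1}$; this gives the second equality of the first chain. For the dual product $\prod_{i,j}(1+x_iy_j)$ the same computation with $\log(1+t)=\sum_{k\ge 1}(-1)^{k-1}t^k/k$ introduces an extra sign $\prod_k(-1)^{(k-1)m_k}=(-1)^{\sum_k(k-1)m_k}=(-1)^{n-r}$, where $n=\sum_k k m_k$ is the size and $r=\sum_k m_k$ the length of $\mu$; this gives the second equality of the second chain.

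Next I would connect these to Schur functions through the characteristic map $s_\lambda=\sum_{\mu\in P(n)}z_\mu^{-1}\chi^\lambda_\mu\,p_\mu$, in which $\chi^\lambda_\mu$ is the value of the irreducible character $\chi^\lambda$ on the class $C_\mu$ (see Section~I.7 of~\cite{Ma}). Substituting this into $\sum_\lambda s_\lambda(x)s_\lambda(y)$ and using the column orthogonality relation $\sum_{\lambda\in P(n)}\chi^\lambda_\mu\chi^\lambda_\nu=\delta_{\mu\nu}z_\mu$ collapses the resulting double sum over $\mu,\nu$ to $\sum_\mu z_\mu^{-1}p_\mu(x)p_\mu(y)$, which is exactly the power-sum side; comparing degree by degree proves the first equality of the first chain. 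For the second chain I would additionally use $\chi^{\lambda'}_\mu=\operatorname{sgn}(C_\mu)\,\chi^\lambda_\mu$ together with $\operatorname{sgn}(C_\mu)=(-1)^{n-r}$, which turns $\sum_\lambda s_\lambda(x)s_{\lambda'}(y)$ into $\sum_\mu(-1)^{n-r}z_\mu^{-1}p_\mu(x)p_\mu(y)$.

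The main point requiring care is the sign bookkeeping in the dual identity: the elementary identity $\sum_k(k-1)m_k=n-r$ is precisely what matches the combinatorial sign produced by $\log(1+t)$ with the character sign $(-1)^{n-r}=\operatorname{sgn}(C_\mu)$ appearing in $\chi^{\lambda'}_\mu$; everything else is a routine comparison of coefficients in each total degree. One could instead obtain the Schur expansions bijectively from the RSK and dual RSK correspondences, but the character-theoretic route is closer in spirit to the rest of this article.
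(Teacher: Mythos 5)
Your proposal is correct: the logarithmic expansion of $\prod_{i,j}(1-x_iy_j)^{-1}$ and $\prod_{i,j}(1+x_iy_j)$ yields the power-sum expansions with coefficients $z_\mu^{-1}$ and $(-1)^{n-r}z_\mu^{-1}$, and passing to the Schur expansions via $s_\lambda=\sum_\mu z_\mu^{-1}\chi^\lambda(C_\mu)p_\mu$, column orthogonality of the character table, and $\chi^{\lambda'}(\sigma)=\operatorname{sgn}(\sigma)\chi^{\lambda}(\sigma)$ with $\operatorname{sgn}=(-1)^{n-r}$ on the class $C_\mu$ is sound, including the sign bookkeeping. The paper gives no proof of this proposition, quoting it as classical from Section~I.4 of~\cite{Ma}, and your argument is essentially the standard one found there, so there is no substantive difference in approach.
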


These can be rewritten in terms of matrices as follows:

\begin{proposition}\label{prop:Cauchy_identities(matrix)}\slshape
   Consider $A \in \operatorname{Mat}_{M,M}(\mathcal{A})$ and 
   $B \in \operatorname{Mat}_{N,N}(\mathcal{A})$.
   When the entries of~$A$ and $B$ commute with each other, we have
   \begin{align*}
      \operatorname{per}_n(A \otimes B)
      &= \sum_{\lambda \in P(n)} \operatorname{imm}^{\lambda}_n A \,
      \operatorname{imm}^{\lambda}_n B \\
      &= \sum_{\mu = (\mu_1,\ldots,\mu_r) \in P(n)} \frac{1}{z_{\mu}} 
      \operatorname{tr}(A^{\mu_1}) \cdots \operatorname{tr}(A^{\mu_r})
      \operatorname{tr}(B^{\mu_1}) \cdots \operatorname{tr}(B^{\mu_r}), 
      \allowdisplaybreaks\\
      \operatorname{det}_n(A \otimes B)
      &= \sum_{\lambda \in P(n)} \operatorname{imm}^{\lambda}_n A \,
      \operatorname{imm}^{\lambda'}_n B \\
      &= \sum_{\mu = (\mu_1,\ldots,\mu_r) \in P(n)} (-1)^{n-r} \frac{1}{z_{\mu}} 
      \operatorname{tr}(A^{\mu_1}) \cdots \operatorname{tr}(A^{\mu_r})
      \operatorname{tr}(B^{\mu_1}) \cdots \operatorname{tr}(B^{\mu_r}).
   \end{align*}
\end{proposition}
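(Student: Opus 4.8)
The plan is to deduce these matrix identities from the classical polynomial Cauchy identities of Proposition~\ref{prop:Cauchy_identities(polynomial)} by passing to eigenvalues.

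First I would reduce to the case $\mathcal{A} = \mathbb{C}$. Since the entries of $A$ and $B$ commute with one another, they all lie in a single commutative $\mathbb{C}$-subalgebra of $\mathcal{A}$, which is a quotient of a polynomial ring in the finitely many indeterminates $\{a_{ij}\} \cup \{b_{kl}\}$; and each side of each asserted equality is a polynomial in these variables with complex coefficients (homogeneous of degree $n$ in the entries of $A$ and of degree $n$ in those of $B$). Hence it suffices to verify the identities when $A \in \operatorname{Mat}_{M,M}(\mathbb{C})$ and $B \in \operatorname{Mat}_{N,N}(\mathbb{C})$; and since both sides are then polynomial functions of the entries and the diagonalizable matrices are Zariski dense, we may even assume $A$ and $B$ diagonalizable, with eigenvalues $x_1,\ldots,x_M$ and $y_1,\ldots,y_N$ respectively.

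Next I would invoke the standard fact that the eigenvalues of the Kronecker product $A \otimes B$ are exactly the products $x_i y_j$ ($1 \le i \le M$, $1 \le j \le N$), together with the translation dictionary
\[
   \operatorname{imm}^{\lambda}_n A = s_{\lambda}(x_1,\ldots,x_M), \qquad
   \operatorname{tr}(A^k) = p_k(x_1,\ldots,x_M)
\]
(the first by Proposition~\ref{prop:imm_n_and_Schur}, the second from the eigenvalue expression for the trace of a power), and likewise for $B$ and for $A \otimes B$. In particular, since $\operatorname{imm}^{(n)} = \operatorname{per}$ and $\operatorname{imm}^{(1^n)} = \operatorname{det}$, we obtain $\operatorname{per}_n(A\otimes B) = s_{(n)}(x_iy_j) = h_n(x_iy_j)$ and $\operatorname{det}_n(A\otimes B) = s_{(1^n)}(x_iy_j) = e_n(x_iy_j)$, where $h_n$ and $e_n$ are the complete homogeneous and the elementary symmetric polynomials, while $\operatorname{tr}((A\otimes B)^{\mu_l}) = p_{\mu_l}(x_iy_j) = p_{\mu_l}(x)\,p_{\mu_l}(y)$. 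Now I would extract the degree-$n$ homogeneous part (equivalently, the coefficient of $t^n$ after the substitution $y_j \mapsto t y_j$) from each of the two identities of Proposition~\ref{prop:Cauchy_identities(polynomial)} applied to the alphabets $(x_1,\ldots,x_M)$ and $(y_1,\ldots,y_N)$; since $s_\lambda$, $h_n$, $e_n$, $p_\mu$ are homogeneous of the expected degrees and $|\lambda'|=|\lambda|$, this yields
\[
   h_n(x_iy_j) = \sum_{\lambda\in P(n)} s_\lambda(x)\,s_\lambda(y) = \sum_{\mu\in P(n)} \tfrac{1}{z_\mu}\, p_\mu(x)\,p_\mu(y)
\]
and the analogue for $e_n$ with $s_\lambda(y)$ replaced by $s_{\lambda'}(y)$ and $z_\mu^{-1}$ by $(-1)^{n-r}z_\mu^{-1}$, where $r$ is the length of $\mu$. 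Substituting the matrix quantities back in through the dictionary above gives exactly the four claimed equalities.

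The computations are all routine; the only points needing a word of care are the reduction to $\mathcal{A}=\mathbb{C}$ (which is necessary because Proposition~\ref{prop:imm_n_and_Schur} and the eigenvalue interpretations of $\operatorname{det}_n$ and of $\operatorname{tr}(A^k)$ are stated only over $\mathbb{C}$) and the identification of the spectrum of $A\otimes B$. Neither is a genuine obstacle, so I expect no real difficulty in this proof; the content is precisely the classical Cauchy identities transported through the eigenvalue dictionary.
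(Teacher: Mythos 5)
Your proposal is correct and follows essentially the same route as the paper, which likewise deduces the matrix identities from Proposition~\ref{prop:Cauchy_identities(polynomial)} via Proposition~\ref{prop:imm_n_and_Schur} and the relation $\operatorname{tr}(A^k)=p_k(x_1,\ldots,x_N)$. The only difference is that you spell out the steps the paper leaves implicit (reduction from general $\mathcal{A}$ to $\mathbb{C}$ by a polynomial-identity argument, the spectrum of $A\otimes B$, and extraction of the degree-$n$ homogeneous part), and these are handled correctly.
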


Here we put 
$\operatorname{det}_n = \operatorname{imm}^{(1^n)}_n$ and 
$\operatorname{per}_n = \operatorname{imm}^{(n)}_n$.
Moreover $A \otimes B$ is the Kronecker product. 
Namely we put
$A \otimes B = (a_{ij}b_{kl})_{(i,k), (j,l) \in [M] \times [N]}$
for~$A = (a_{ij})_{1 \leq i,j \leq M}$
and $B = (b_{ij})_{1 \leq i,j \leq N}$.

Proposition~\ref{prop:Cauchy_identities(matrix)} follows 
from Proposition~\ref{prop:Cauchy_identities(polynomial)} 
by using Proposition~\ref{prop:imm_n_and_Schur} and the relation
\[
   \operatorname{tr}(A^k) = p_k(x_1,\ldots,x_N).
\]
Here $A$ is an $N \times N$ complex matrix,
and $x_1,\ldots,x_N$ are the eigenvalues of~$A$.

Theorem~\ref{thm(introduction):anticommuting_Cauchy_identities} 
is quite similar to Proposition~\ref{prop:Cauchy_identities(matrix)}.
Thus, we can regard Theorem~\ref{thm(introduction):anticommuting_Cauchy_identities} 
as an anticommuting analogue of the Cauchy identities.

\begin{proof}[Proof of Theorem~\textsl{\ref{thm(introduction):anticommuting_Cauchy_identities}}]
   Let $a_{ij}$ and $b_{ij}$ denote the $(i,j)$th entries of~$A$ and $B$, respectively.
   Then we have
   \begin{align*}
      \operatorname{det}_n (A \otimes B)
      &= \frac{1}{n!}
      \sum_{I \in [M]^n, \, J \in [N]^n} \sum_{\sigma \in \mathfrak{S}_n} \operatorname{sgn}(\sigma)
      a_{i_1i_{\sigma(1)}} b_{j_1 j_{\sigma(1)}} \cdots  a_{i_n i_{\sigma(n)}} b_{j_n j_{\sigma(n)}} \\
      &= \frac{1}{n!}
      \sum_{\sigma \in \mathfrak{S}_n} \operatorname{sgn}(\sigma)
      \sum_{I \in [M]^n} a_{i_1 i_{\sigma(1)}} \cdots a_{i_n i_{\sigma(n)}} 
      \sum_{J \in [N]^n} b_{j_1 j_{\sigma(1)}} \cdots b_{j_n j_{\sigma(n)}}, \\
      \operatorname{per}_n (A \otimes B)
      &= \frac{1}{n!}
      \sum_{I \in [M]^n, \, J \in [N]^n} \sum_{\sigma \in \mathfrak{S}_n} 
      a_{i_1 i_{\sigma(1)}} b_{j_1 j_{\sigma(1)}} \cdots  a_{i_n i_{\sigma(n)}} b_{j_n j_{\sigma(n)}} \\
      &= \frac{1}{n!}
      \sum_{\sigma \in \mathfrak{S}_n}
      \sum_{I \in [M]^n} a_{i_1 i_{\sigma(1)}} \cdots a_{i_n i_{\sigma(n)}} 
      \sum_{J \in [N]^n} b_{j_1 j_{\sigma(1)}} \cdots b_{j_n j_{\sigma(n)}}.
   \end{align*}
   Denoting the cycle type of~$\sigma$ by~$\mu = (\mu_1,\ldots,\mu_r)$, we have
   \begin{align*}
      \sum_{I \in [M]^n}
      a_{i_1i_{\sigma(1)}} \cdots a_{i_n i_{\sigma(n)}}
      &= f(\sigma) \operatorname{tr}(A^{\mu_1}) \cdots \operatorname{tr}(A^{\mu_r}), \\
      \sum_{J \in [N]^n}
      b_{j_1 j_{\sigma(1)}} \cdots b_{j_n j_{\sigma(n)}} 
      &= f(\sigma) \operatorname{tr}(B^{\mu_1}) \cdots \operatorname{tr}(B^{\mu_r}).
   \end{align*}
   These quantities vanish unless $\mu \in P_{\mathrm{strict,odd}}(n)$
   as seen in Lemma~\ref{lem:trace_of_A^2k}.
   Moreover, we have $|C_{\mu}| = n! / z_{\mu} = n! / \mu_1 \cdots \mu_r$
   for~$\mu \in P_{\mathrm{strict,odd}}(n)$.
   Combining these facts, we have 
   \begin{align*}
      \operatorname{det}_n (A \otimes B)
      &= \operatorname{per}_n (A \otimes B) \\
      &=
      \frac{1}{n!}
      \sum_{\mu \in P_{\mathrm{strict,odd}}(n)} \frac{n!}{\mu_1 \cdots \mu_r}
      \operatorname{tr}(A^{\mu_1}) \cdots \operatorname{tr}(A^{\mu_r})
      \operatorname{tr}(B^{\mu_1}) \cdots \operatorname{tr}(B^{\mu_r}).
   \end{align*}
   The remainder of the assertion follows from Theorem~\ref{thm(introduction):basis_of_GL-invariants}.
\end{proof}

\begin{remark}
   A similar result holds,
   even if we replace the condition (iii) 
   in Theorem~\ref{thm(introduction):anticommuting_Cauchy_identities} by the following (iii${}'$):
   \begin{itemize}
      \item[(iii${}'$)]
      the entries of~$A$ anticommute with the entries of~$B$.
   \end{itemize}
   Indeed, under the conditions (i), (ii) and (iii${}'$), we have
   \begin{align*}
      \operatorname{det}_n (A \otimes B)
      &= \operatorname{per}_n (A \otimes B) \\
      &= \varepsilon_n
      \sum_{\lambda \in P_{\mathrm{self\text{-}conj}}(n)} (-1)^{m(\lambda)}
      \operatorname{imm}^{*\lambda}_n A \,
      \operatorname{imm}^{*\lambda}_n B \\
      &= \varepsilon_n 
      \sum_{(\mu_1,\ldots,\mu_r) \in P_{\mathrm{strict,odd}}(n)} \frac{1}{\mu_1 \cdots \mu_r} 
      \operatorname{tr}(A^{\mu_1}) \cdots \operatorname{tr}(A^{\mu_r})
      \operatorname{tr}(B^{\mu_1}) \cdots \operatorname{tr}(B^{\mu_r})
   \end{align*}
   by a similar proof.
   Here $\varepsilon_n$ means
   \[
      \varepsilon_n = 
      \begin{cases}
         1, & n = 0,1 \mod 4, \\
         -1, & n = 2,3 \mod 4.
      \end{cases}
   \]
\end{remark}

%
\section*{Acknowledgments}
%
%
The author is grateful for the hospitality of the Max-Planck-Institut f\"ur Mathematik in Bonn, 
where the essential part of this work started. 
The author is also grateful to the referee for the valuable comments.

%
%
%

\end{document}